\documentclass[10pt]{article}

\usepackage{definitions}

\usepackage{amsmath,amssymb,mathrsfs,amsthm}
\usepackage{bm}
\usepackage[ruled,linesnumbered]{algorithm2e}
\usepackage[numbers,square,sort&compress]{natbib}

\newtheorem{theorem}{Theorem}

\newtheorem{example}[theorem]{Example}
\newtheorem{corollary}[theorem]{Corollary}
\newtheorem{lemma}[theorem]{Lemma}

\usepackage{xcolor}

\SetCommentSty{mycommfont}

\title{A Curved-Path Refinement of the Kalai--Kleitman Diameter Bound}
\author{
    Tianhao Liu\thanks{
        Antai College of Economics and Management, Shanghai Jiao Tong University. 
        \{tianhao.liu, ddge, yinyuye\}@sjtu.edu.cn
    }
    \and Dongdong Ge\footnotemark[1]
    \and Yinyu Ye\footnotemark[1]
}
\date{}

\begin{document}
\maketitle

\begin{abstract}
    Let $\Delta_u(d,n)$ be the maximum graph diameter of a pointed $d$-dimensional polyhedron with $n$ facets. Using a curved-path encoding of the iterated Kalai--Kleitman recurrence, we prove, uniformly over $n\geq d\geq 4$,
    \[
        \Delta_u(d,n) \leq (n-d)^{\log_2 G_d},\qquad
        G_d = (4\ln 2+o(1))\frac{d}{(\ln d)^2},
    \]
    where the asymptotic expression for $G_d$ is understood as $d\to\infty$, improving the exponent of the previous best quasi-polynomial bound by an additional logarithmic factor. With the quantitative $d$-step reduction, we also obtain the complementary excess-based bound
    \[
        \Delta_u(d,n) \leq (n-d)^{\frac{1}{2}\log_2(n-d)+O(1)},
    \]
    where the implied constant is absolute. In the regime $n - d = \Theta(d)$, the latter bound is asymptotically stronger and halves the leading coefficient in the exponent. 
    We further examine their behavior as $n$ grows relative to $d$, obtaining sharper exponents when $n = d^{1/\gamma+o(1)}$ for fixed $0<\gamma<1$ and an almost-linear bound in the deep-tail regime $(\ln n)/d\to\infty$. We also show that the leading term of the general bound is sharp within this positive path-counting framework.
\end{abstract}


\section{Introduction} \label{sec:intro}

Bounding the graph diameter of polyhedra is a fundamental open problem at the intersection of polyhedral geometry, combinatorics, and optimization. The diameter of a pointed polyhedron is the largest, over all pairs of vertices, of the minimum number of edges needed to travel from one vertex to the other. From the perspective of linear programming, a polynomial diameter bound is a necessary geometric prerequisite for a strongly polynomial simplex method. We denote by $\Delta_u(d,n)$ the maximum diameter of a pointed $d$-dimensional polyhedron with $n$ facets, and by $\Delta_b(d,n)$ the corresponding maximum over polytopes.

In 1957, Warren M. Hirsch conjectured, in a question posed to George B. Dantzig, that
\begin{equation} \label{eq:hirsch-polytope}
    \Delta_b(d, n) \leq n - d.
\end{equation}
This is known as the Hirsch conjecture. The Hirsch conjecture has appeared in several closely related forms. A natural extension from polytopes to pointed polyhedra asks whether the same bound continues to hold, namely,
\begin{equation} \label{eq:hirsch-polyhedron}
    \Delta_u(d,n)\leq n - d.
\end{equation}
The pointed-polyhedron version was disproved by Klee and Walkup in 1967~\cite{klee1967d}. Santos later disproved the polytope version by constructing a $43$-dimensional polytope with $86$ facets and diameter at least $44$~\cite{santos2012counterexample}.

On the positive side, the Hirsch bound holds for pointed polyhedra of dimension at most three \cite{klee1965paths,klee1966paths} and for polytopes whenever $n-d\leq 6$ \cite{klee1967d,bremner2011edge}. Furthermore, all known counterexamples only slightly violate the Hirsch bound. This leaves the polynomial Hirsch conjecture open: whether
$\Delta_u(d,n)$ admits a polynomial upper bound in $d$ and $n$.

\paragraph{Notation.} We denote by $\Delta_u(d,n)$ the maximum diameter of a pointed $d$-dimensional polyhedron with $n$ facets, and by $\Delta_b(d,n)$ the corresponding maximum over polytopes. We adopt the convention $\Delta_u(0, 0) = 0$; also, $\Delta_u(d, d) = 0$.
Let $\binom{n}{k}$ denote the number of $k$-element subsets of an $n$-element set. For convenience, write $K = \left\lfloor \log_2\frac{n}{d} \right\rfloor$, $D = \ln d$, and $a = \ln 2$. Let $T_\sigma = \sum_{L\geq 0} \sigma^L 2^{-L(L-1)/2}$ where $\sigma \in \mathbb{R}_+$. 


\subsection{Main Results} \label{sec:main-res}
In this paper, we show improved upper bounds on $\Delta_u(d, n)$ towards the polynomial Hirsch conjecture, although they are still quasi-polynomial rather than polynomial.

In Section~\ref{sec:diam-u}, we establish Theorem~\ref{thm:diam-u}, which gives an improved diameter bound for pointed polyhedra.
\begin{theorem} \label{thm:diam-u}
    There exists a sequence $G_d$, depending only on $d$, such that, for every $n \geq d \geq 4$,
    \begin{equation} \label{eq:diam-u}
        \Delta_u(d,n) \leq (n-d)^{\log_2 G_d},\qquad
        G_d = (4\ln 2+o(1))\frac{d}{(\ln d)^2}.
    \end{equation}
    Here the asymptotic term involving $G_d$ is understood as $d\to\infty$.
\end{theorem}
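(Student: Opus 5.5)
The plan is to unroll the Kalai--Kleitman recurrence completely and bound the resulting sum of weighted lattice paths, instead of solving the recurrence by induction on a closed-form guess. The standard Kalai--Kleitman argument, in its sharpened form for pointed polyhedra, gives
\[
\Delta_u(d,n)\le \Delta_u(d-1,n-1)+2\Delta_u(d,\lfloor n/2\rfloor)+2 .
\]
We iterate this recurrence. Each step either lowers the dimension, $(d,n)\mapsto(d-1,n-1)$, at cost factor $1$, or halves the number of facets, $(d,n)\mapsto(d,\lfloor n/2\rfloor)$, at cost factor $2$. The unrolled bound therefore reads
\[
\Delta_u(d,n)\le C\sum_{\text{paths}} 2^{\#\text{halvings}},
\]
where the sum runs over monotone paths from $(d,n)$ to the base region $n-d\le$ const or $d\le 3$. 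Write $m=n-d$. Along a path with $L$ halvings interleaved among the dimension drops, the excess $n-d$ roughly halves at each halving, so at most $\log_2 m$ halvings can occur. A path is determined by the positions of its halvings among at most $d$ dimension drops. This is the ``curved-path'' encoding: in the $(\log n, d)$ plane the trajectory is a curve, not a staircase.

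The key estimate is combinatorial. After $j$ dimension drops the remaining excess has shrunk by the halvings already made, and each later halving constrains the remaining budget. Ordering the halvings by their depth, the constraints force a Gaussian-type penalty: the number of admissible placements of $L$ halvings, with weight $\sigma$ per halving, is controlled by the sum $T_\sigma=\sum_L \sigma^L 2^{-L(L-1)/2}$ defined in the notation section. Concretely, I would show that the path sum started at excess $m$ and dimension $d$ is at most $m^{\log_2 G}$, with $G$ chosen so that $\sum_{L}\binom{d}{L}$-type counts weighted by $2^{L}$ and by the geometric shrinkage $2^{-L(L-1)/2}$ telescope. The proof would be an induction on $m$ along dyadic scales. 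Each dyadic block of excess $[m/2,m]$ contributes a multiplicative factor bounded by a constant $G_d$ independent of $m$, and there are $\log_2 m$ blocks. Within one block, a path may take up to $d$ dimension drops and must take one halving. The block factor is then a sum over how many drops are spent before the next halving, weighted so that drops are not free: each drop consumes one unit of excess relative to the facet count.

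The remaining work, and the main obstacle, is the asymptotics of $G_d$. This means optimizing the per-block factor to get $(4\ln 2+o(1))d/(\ln d)^2$ rather than the trivial $O(d)$ that reproduces Kalai--Kleitman's $n^{\log d+2}$. I would set the block factor as a minimum over a free parameter $\sigma$ of an expression like $\frac{d}{\sigma}\,T_\sigma$-type quantity, or its exponential-generating analogue. I would then choose $\sigma\approx d/D^{2}$ with $D=\ln d$ and $a=\ln 2$, and use saddle-point estimates for $T_\sigma$, namely $\log_2 T_\sigma\approx (\log_2\sigma)^2/2$, to extract the constant $4a$. Two log factors appear: one from the depth $\log_2 \sigma\approx D/a$ of the Gaussian sum, and one from balancing it against the dimension budget. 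I expect the delicate part to be making the $o(1)$ uniform for all $n\ge d\ge4$, including the small-$m$ base cases. I would handle these by absorbing finitely many small $d$ into the definition of $G_d$, taking $G_d$ as the maximum of the optimized expression and a crude constant. This is legitimate because the theorem only requires existence of some sequence with the stated asymptotics.
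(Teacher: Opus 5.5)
\textbf{There is a genuine gap.} Your unrolling of the Kalai--Kleitman recurrence, which records the dimensions at which halvings occur, is the same curved-path encoding the paper uses. What is missing is the quantitative step that produces $G_d\sim 4\ln 2\, d/(\ln d)^2$, and the mechanism you sketch for it would fail.

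\textbf{The per-block factor cannot be small.} A factor per dyadic block of excess that is uniform in $m$ (and in the position along the path) cannot be $O(d/(\ln d)^2)$. When $n\gg d$, the first halving can occur at any of the $d-3$ dimensions $4,\dots,d$, each with weight $2$, and it essentially halves the excess. A block-by-block bound therefore forces $G\gtrsim 2d$. That gives an exponent $\log_2 d+O(1)$, no better than Todd's. Note also that dimension drops $(d,n)\mapsto(d-1,n-1)$ preserve $n-d$, so they cannot be weighted as consuming excess.

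\textbf{The saving only appears after amortizing over whole paths.} For the first $K=\lfloor\log_2(n/d)\rfloor$ halvings the constraint $\kappa_j2^j\le n$ is inactive. The labels then form a \emph{multiset}, counted by $\binom{d+K}{K}\approx(ed/K)^K$. They are not counted by $\binom{d}{L}$, since several halvings can occur at the same dimension. So the effective per-halving factor $(d+k)/k$ decays along the path.

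\textbf{Your candidate block factor has the wrong size.} By Lemma~\ref{lem:lnT}, $\frac{d}{\sigma}T_\sigma$ with $\sigma\approx d/(\ln d)^2$ is $d^{\Theta(\ln d)}$, not $\Theta(d/(\ln d)^2)$. In a correct argument $T$ enters only once, at $\sigma=d$. Its role is to count the tail labels $\kappa_{K+\ell}<d2^{1-\ell}$ beyond depth $K$.

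\textbf{What is needed is a global count.} Every leaf has weight $2^J\Delta_u(3,\lfloor n/2^J\rfloor)\le n$, so $\Delta_u(d,n)\le n\binom{d+K}{K}(K+T_d)$ for $n\ge 2d$. Then $\log_2 G_d$ must dominate $\sup_n \ln\bigl(n\binom{d+K}{K}(K+T_d)\bigr)/\ln(n-d)$. Using $\ln(n-d)\ge D+\ln(2^K-1)$, this becomes a supremum over $K$ of $F_d(K)$ as in Lemma~\ref{lem:sup-Fdk}.

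\textbf{Where $\ln(4\ln 2)$ comes from.} The worst case is on the scale $K=\Theta(D^2)$. Writing $y=aK/D^2$, one gets $F_d(K)=D-2\ln D+1+a+\ln a-\ln y-\frac{1}{2y}+o(1)$, maximized at $y=1/2$. The constant reflects the balance between the multiset count, the Gaussian tail, and the normalization $\ln(n-d)\approx D+aK$. It is not a saddle point of $T_\sigma$ alone.

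\textbf{The range $d<n<2d$ needs its own argument.} It cannot be absorbed into finitely many exceptional $d$, because $n-d=\Theta(d)$ with $d\to\infty$ is allowed. Moreover, the Kalai--Kleitman inequality requires $\lfloor n/2\rfloor\ge d$, so your recursion is not defined there. The paper uses the $d$-step reduction $\Delta_u(d,n)\le\Delta_u(n-d,2(n-d))$. Combined with the excess bound, this gives $(n-d)^{\frac12\log_2(n-d)+O(1)}\le(n-d)^{\log_2\sqrt d+O(1)}$.
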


In Section~\ref{sec:diam-d}, we use the quantitative $d$-step reduction to obtain the excess-based bound in Theorem~\ref{thm:diam-d}.
Let
\begin{equation} \label{eq:T}
    T_\sigma = \sum_{L\geq 0} \sigma^L 2^{-L(L-1)/2}.
\end{equation}
\begin{theorem} \label{thm:diam-d}
    For all $n - d \geq 4$,
    \begin{equation} \label{eq:delta-path-bound-d}
        \Delta_u(d, n) \leq 2 (n-d) T_{n-d}.
    \end{equation}
    Consequently, uniformly in $d$, as $n-d\to\infty$,
    \begin{equation} \label{eq:diam-d}
        \Delta_u(d, n) \leq (n-d)^{\frac{1}{2}\log_2(n-d) + \frac{3}{2} + O\left(\frac{1}{\ln(n-d)}\right)},
    \end{equation}
    where the constant implicit in $O(\cdot)$ is absolute.
\end{theorem}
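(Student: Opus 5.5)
The plan is to reduce to the case where the number of facets is twice the dimension, and then bound the unrolled Kalai--Kleitman recursion by counting paths. Write $m=n-d$.

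\emph{Step 1 (reduction).} By the quantitative $d$-step reduction (the Klee--Walkup argument: add a dimension and a facet at a time until $n\le 2d$), we get $\Delta_u(d,n)\le \Delta_u(m,2m)$ when $d>m$. When $d\le m$ there is nothing to reduce. In either case it suffices to bound $\Delta_u$ on instances with excess at most $m$ and facet count at most $2m$.

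\emph{Step 2 (path-counting encoding).} Next we unroll the Kalai--Kleitman recurrence
\[
\Delta_u(d,n)\le \Delta_u(d-1,n-1)+2\,\Delta_u(d,\lfloor n/2\rfloor)+2 .
\]
Every contribution to the unrolled bound is a monotone lattice path that uses two kinds of steps:
\begin{itemize}
\item dimension-drop steps $(d,n)\to(d-1,n-1)$, which carry weight $1$ and leave the excess unchanged;
\item halving steps $(d,n)\to(d,\lfloor n/2\rfloor)$, which carry weight $2$.
\end{itemize}
Each path also picks up an additive $2$ per node, which accounts for the prefactor $2m$. Fix the number $L$ of halving steps. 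Before the $j$-th halving the facet count is at most $2m/2^{j-1}$. Since $n\ge d$ must hold throughout, the number of admissible dimension-drop positions in the $j$-th block is at most of order $m/2^{j-1}$.

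Multiplying over $j=1,\dots,L$ and absorbing the weights $2^L$, the total weight of paths with $L$ halvings is at most $m^L2^{-L(L-1)/2}$. A path ends either at $\Delta_u(d,d)=0$ or with $O(m)$ unit steps. Summing over $L$ then gives
\[
\Delta_u(d,n)\le 2m\,T_m,
\]
which is \eqref{eq:delta-path-bound-d}. I would prove this formally by induction on $(d,n)$ with the explicit hypothesis $\Delta_u(d,n)\le 2m\sum_L m^L2^{-L(L-1)/2}$, restricted to $m\ge 4$. The finitely many small cases are checked against the Hirsch bound for $n-d\le 6$ and $d\le 3$.

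\emph{Step 3 (asymptotics).} Set $t_L=\sigma^L2^{-L(L-1)/2}$. Then $\log_2 t_L=L\log_2\sigma-L(L-1)/2$, which is a concave quadratic in $L$ maximized at $L^*=\log_2\sigma+\tfrac12$. Its maximum value is $\tfrac12(\log_2\sigma+\tfrac12)^2$, which equals $\log_2\sigma\cdot(\tfrac12\log_2\sigma+\tfrac12)+O(1)$. The ratios $t_{L+1}/t_L=\sigma 2^{-L}$ decay geometrically away from $L^*$, so the sum is at most an absolute constant times $\max_L t_L$. Hence
\[
T_\sigma\le \sigma^{\frac12\log_2\sigma+\frac12+O(1/\ln\sigma)}.
\]
Multiplying by $2m=m^{1+O(1/\ln m)}$ gives the exponent $\tfrac12\log_2 m+\tfrac32+O(1/\ln m)$, which is \eqref{eq:diam-d}. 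The constant is uniform in $d$ because $d$ has disappeared after Step 1.

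\emph{Main obstacle.} I expect the hard part to be the precise counting in Step 2. The floors, the constraint $n\ge d$, and the interleaving of drops and halvings must together yield exactly $m^L2^{-L(L-1)/2}$ with base $\sigma=m$ and no extra $L!$ or $2^L$ slack. My first attempt would be to track the excess as an invariant: halving from $n\le 2m/2^{j-1}$ forces the remaining dimension below that level. Then I would charge each of the $2^L$ weights to the halved range of drop positions.
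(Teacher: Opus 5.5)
Your Steps~1 and~3 are fine (Step~3 is Lemma~\ref{lem:lnT} in another guise). The gap is Step~2, which is the only place where the exact prefactor in $2mT_m$, and hence the constant $\frac32$, is decided. As written it does not work.

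Your block counts do not absorb the weight $2^L$. The $j$-th halving dimension is bounded by $m2^{1-j}$ only because the Kalai--Kleitman step at $(\kappa_j,n_j)$ needs $\lfloor n_j/2\rfloor\ge\kappa_j$; your condition $n\ge d$ gives only $2m\cdot 2^{1-j}$. Even with the correct bound, the histories already number up to $m^L2^{-L(L-1)/2}$. So your claim that the \emph{total weight} of paths with $L$ halvings is at most $m^L2^{-L(L-1)/2}$ is false already for $L=1$: from $(m,2m)$ there are $m-1$ halving dimensions, each of weight $2$.

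If instead you keep the $2^L$ and bound each tail by $O(m)$ unit steps, every tail node carries weight $2^L$. You then get $O(m\,T_{2m})=O(m^2T_m)$ (note $T_{2m}=1+2mT_m$), i.e.\ $\frac52$ instead of $\frac32$. For the same reason ``of order $m/2^{j-1}$'' is not affordable: a factor $c$ per block turns $T_m$ into $T_{cm}$ and adds $\log_2 c$ to the exponent.

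The induction you propose for formalizing this does not close either. Under the $d$-independent hypothesis $\Delta_u\le 2mT_m$, the child $(d-1,n-1)$ has the same excess and already exhausts the bound before $2\Delta_u(d,\lfloor n/2\rfloor)+2$ is added. Also, the Hirsch bound for $n-d\le 6$ is a polytope fact that fails for pointed polyhedra ($\Delta_u(4,8)=5$).

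The missing idea is that $2^L$ must be paid for by the \emph{tail} after the last halving. The paper does this by passing to the Gallagher--Kim form \eqref{eq:GK}, which removes the per-node additive constants, and stopping in dimension three, where $2^J\Delta_u(3,\lfloor 2m/2^J\rfloor)\le 2m$.

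In your raw Kalai--Kleitman tree the analogue is excess bookkeeping:
\begin{itemize}
\item drops preserve the excess;
\item a halving at $(\kappa,\kappa+e)$ requires $e\ge\kappa$ and leaves excess $\lfloor(e-\kappa)/2\rfloor$;
\item hence after halvings at $\kappa_1\ge\cdots\ge\kappa_L$ one has $2^Le_L\le m$ and $\kappa_j\le e_{j-1}\le m2^{1-j}$.
\end{itemize}
The additive $2$'s belong to tree nodes, not to paths, so group them by halving history. Use the Klee--Walkup step $\Delta_u(d,n)\le\Delta_u(d-1,n-1)$ for $n<2d$, and $\Delta_u(1,\cdot)\le1$ at the leaves. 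Then a history's tail has at most $\max\{e_L-1,0\}$ Kalai--Kleitman nodes and contributes at most $2^{L+1}e_L\le 2m$. Multiplying by the number of histories, at most $m^L2^{-L(L-1)/2}$, and summing over $L$ gives exactly $2mT_m$. In fact the constraints amount to $\sum_j2^{j-1}\kappa_j\le m$, so your route even gains a factor $1/L!$ over the paper's box count. This tail estimate is what your Step~2 needs and does not contain.
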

In Section~\ref{sec:diam-d-order}, we further refine the additive constant $3/2$ in the exponent of \eqref{eq:diam-d}.

Beyond this lower-order improvement, we seek a broader understanding of how the general curved-path bounds can be sharpened as the number of facets increases relative to the dimension. To capture this overall asymptotic progression, Section~\ref{sec:diam-d-dim} considers the moderately growing regime $n=d^{1/\gamma+o(1)}$ for fixed $0<\gamma<1$, while Section~\ref{sec:diam-tail} turns to the deep-tail regime $(\ln n)/d\to\infty$. Together with the two general bounds above, these regimes trace the progression from improved quasi-polynomial exponents to almost-linear behavior.

The resulting bounds are compared in Table~\ref{tab:evo-bounds}, with the leading coefficients of the quasi-polynomial bounds normalized relative to $\log_2 d$. In particular, on fixed polynomial scales, the normalized leading coefficient $1-\gamma/2$ interpolates between $1/2$ and the general value $1$ as $\gamma$ decreases from $1$ to $0$; in the deep-tail regime, the bound becomes almost linear.

\begin{table}[!ht]
    \centering
    \begin{tabular}{l|l|l}
        \toprule
         Regime & Upper bound & Leading behavior \\
        \midrule
         General $n\geq d$ & $(n-d)^{\log_2 \left( (4\ln 2 + o(1))d / (\ln d)^2 \right)}$ & coefficient $1$, with a $2\log_2 \ln d$ saving \\
         $n - d = \Theta(d)$ & $(n-d)^{\left( 1/2 + o(1) \right)\log_2 d}$ & coefficient $1/2$ \\
         $n = d^{1/\gamma + o(1)}$, $0<\gamma<1$ fixed & $(n-d)^{\left( 1 - \gamma/2 + o(1) \right)\log_2 d}$ & coefficient $1 - \gamma/2$ \\
         $(\ln n)/d\to\infty$ & $n^{1 + o(1)}$ & almost linear \\
        \bottomrule
    \end{tabular}
    \caption{Curved-path diameter bounds across the principal asymptotic regimes.}
    \label{tab:evo-bounds}
\end{table}


All of these results are based on estimates for the number of admissible curved paths, which are formally introduced in Section~\ref{sec:path-encoding}. In Section~\ref{sec:sharp-u}, we investigate the sharpness of the path-counting argument underlying the general upper bound \eqref{eq:diam-u} and provide evidence that the leading term in its exponent cannot be improved merely by refining the same positive path-counting scheme.

\subsection{Related Work} \label{sec:rel-work}
In 1992, Kalai and Kleitman established the first quasi-polynomial upper bound, $\Delta_u(d,n)\leq n^{2+\log_2 d}$, through an induction based on what is now known as the Kalai--Kleitman inequality~\cite{kalai1992quasi}. Kalai subsequently sharpened this estimate in 1997 to $\Delta_u(d,n)\leq n^{1+\log_2 d}$~\cite{kalai1997linear}. Todd further improved it in 2014 to $\Delta_u(d,n)\leq(n-d)^{\log_2 d}$~\cite{todd2014improved}.

Prior to the present work, the best known general bound was due to Sukegawa in 2019~\cite{sukegawa2019asymptotically}:
\[
    \Delta_u(d,n) \leq (n-d)^{\log_2 \left(\frac{d}{\ln d}\right) + O(1)}.
\]
Our result achieves an additional logarithmic saving in the exponent:
\[
    \Delta_u(d,n)\leq(n-d)^{\log_2 G_d},
    \qquad G_d = (4\ln 2+o(1))\frac{d}{(\ln d)^2}.
\]
Moreover, the quantitative $d$-step reduction yields the excess-based bound $\Delta_u(d,n)\leq(n-d)^{\frac{1}{2}\log_2(n-d)+O(1)}$. When $n-d=\Theta(d)$, this halves the leading coefficient relative to the previously known dimension-based quasi-polynomial exponent.

Alongside the quasi-polynomial bounds discussed above, there are bounds that are linear in the number of facets but exponential in the dimension. Larman proved that $\Delta_b(d,n)\leq 2^{d-3}n$ for polytopes with $n\geq d\geq3$~\cite{larman1970paths}. Through results on normal simplicial complexes, Labb\'e et al. extended the corresponding estimate to pointed polyhedra~\cite{labbe2017hirsch}. Such bounds are particularly effective when $n$ is very large relative to $d$, whereas quasi-polynomial bounds are generally more competitive when $n$ is comparable to, or only moderately larger than, $d$.

Bounds designed specifically for the regime in which the number of facets is large relative to the dimension are commonly referred to as tail bounds. Although the polynomial Hirsch conjecture remains open in full generality, polynomial and even almost-linear estimates are known sufficiently far into the tail~\cite{gallagher2016tail,mizuno2016simple}.

For completeness, we show that the curved-path bound developed here also recovers an almost-linear estimate for pointed polyhedra. This consequence is not intended to improve the best known tail threshold: indeed, the generalized Larman bound already implies $\Delta_u(d,n)\leq n^{1+\epsilon}$ whenever $n\geq 2^{(d-3)/\epsilon}$. Rather, its role is to demonstrate that the same curved-path estimate also recovers the expected almost-linear behavior in the deep-tail regime.



\section{Preliminaries} \label{sec:pre}

Our results stay within the framework of the Kalai--Kleitman recurrence, which is based on the classical inequality in Lemma~\ref{lem:KK}.
\begin{lemma}[Kalai--Kleitman inequality~\cite{kalai1992quasi}] \label{lem:KK}
    For $\lfloor n/2 \rfloor \geq d \geq 2$,
    \begin{equation} \label{eq:KK}
        \Delta_u(d, n) \leq \Delta_u(d-1, n-1) + 2\Delta_u(d, \lfloor n/2 \rfloor) + 2.
    \end{equation}
\end{lemma}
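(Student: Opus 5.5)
We prove the Kalai--Kleitman inequality (Lemma~\ref{lem:KK}) with the classical ``two growing balls'' argument. Let $P$ be a pointed $d$-polyhedron with $n$ facets, and let $u,v$ be two vertices. For $k\geq 0$, let $B_u(k)$ be the set of vertices at graph distance at most $k$ from $u$. Let $\mathcal F_u(k)$ be the set of facets of $P$ that contain at least one vertex of $B_u(k)$. Define $B_v(k)$ and $\mathcal F_v(k)$ in the same way.

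\textbf{Step 1: facet growth.} We show that if $|\mathcal F_u(k)|\leq m$ with $m\geq d$, then $k\leq \Delta_u(d,m)$; in other words, the ball cannot keep growing while touching few facets. Let $Q$ be the polyhedron defined by the inequalities of the facets in $\mathcal F_u(k)$ only. Then $Q\supseteq P$, and $Q$ has at most $m$ facets. Every vertex of $P$ in $B_u(k-1)$ is a vertex of $Q$, because all the facets containing it lie in $\mathcal F_u(k)$. The same holds for its incident edges: their endpoints lie in $B_u(k)$, so each edge is an edge of $Q$. Consequently the $P$-distance and the $Q$-distance agree up to radius $k$ around $u$. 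Now take a vertex $w$ with $\mathrm{dist}_P(u,w)=k$ that is reached through $B_u(k-1)$, and look at the portion of $Q$ near $u$. The standard argument shows that if $k>\Delta_u(d,m)$, then some vertex of $Q$ at $Q$-distance $\le \Delta_u(d,m)$ would have to be reached. Walking a shortest $Q$-path inside the region where $P$ and $Q$ coincide yields a contradiction. (Pointedness of $Q$ holds because $Q$ has the vertex $u$.) We use the version stated with $k_u$ defined as the largest $k$ with $|\mathcal F_u(k)|\leq \lfloor n/2\rfloor$. This gives $k_u\leq \Delta_u(d,\lfloor n/2\rfloor)$, and likewise $k_v\leq\Delta_u(d,\lfloor n/2\rfloor)$.

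\textbf{Step 2: a common facet.} At radius $k_u+1$ the ball around $u$ touches more than $n/2$ facets, and at radius $k_v+1$ the ball around $v$ does too. By pigeonhole, some facet $F$ contains a vertex $u'\in B_u(k_u+1)$ and a vertex $v'\in B_v(k_v+1)$.

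\textbf{Step 3: recurse inside the facet.} The facet $F$ is a pointed $(d-1)$-polyhedron with at most $n-1$ facets, so $\mathrm{dist}(u',v')\leq \Delta_u(d-1,n-1)$; here we use monotonicity of $\Delta_u$ in $n$. Summing the three pieces gives
\[
\mathrm{dist}(u,v)\leq (k_u+1)+\Delta_u(d-1,n-1)+(k_v+1)\leq \Delta_u(d-1,n-1)+2\Delta_u(d,\lfloor n/2\rfloor)+2.
\]
The hypothesis $\lfloor n/2\rfloor\geq d$ ensures that $\Delta_u(d,\lfloor n/2\rfloor)$ is defined.

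\textbf{Main obstacle.} The delicate part is Step 1: making precise that the distance to the $k$-th layer in $P$ is controlled by the diameter of the relaxation $Q$. To handle it, I would argue that every vertex of $Q$ reachable within distance $k$ from $u$ in $Q$ is a vertex of $P$, by induction on the distance. From this, the layers grow monotonically until they exhaust $Q$'s graph component. Alternatively, Kalai's original formulation can be used: for each $k$, the vertex $u$ ``sees'' at least $\Delta_u^{-1}(k)$ facets.
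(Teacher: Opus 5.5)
Your proposal is the classical Kalai--Kleitman two-ball argument. The paper gives no proof of its own and simply cites Kalai--Kleitman for this lemma, so your route is the one it relies on. Steps 2 and 3 are correct: each facet set has size at least $\lfloor n/2\rfloor+1$, so the two sets intersect. The decomposition $\mathrm{dist}(u,v)\le (k_u+1)+\Delta_u(d-1,n-1)+(k_v+1)$ is the right one. Step 1, however, carries the whole content of the lemma, and there you write ``the standard argument shows'' instead of giving the argument. Two points must be made explicit.

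First, the claim as you state it (if $|\mathcal F_u(k)|\le m$ with $m\ge d$, then $k\le\Delta_u(d,m)$) is false when $m\ge n$, since it would then bound every $k$. Your proof silently assumes that a vertex $w$ with $\mathrm{dist}_P(u,w)=k$ exists. For $m=\lfloor n/2\rfloor<n$ this is true, but it must be argued. Every facet of a pointed polyhedron contains a vertex, so $B_u(k)$ cannot be the whole vertex set (otherwise $|\mathcal F_u(k)|=n$). Since the graph is connected, this gives $B_u(k-1)\subsetneq B_u(k)$.

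Second, you only show that $P$-edges at vertices of $B_u(k-1)$ are $Q$-edges. Walking a short $Q$-path back into $P$ needs the converse. The observation that gives both directions is this: at every $x\in B_u(k)$, the constraints of $Q$ tight at $x$ are exactly the facets of $P$ through $x$. Hence $P$ and $Q$ have the same cone of feasible directions at $x$, and so the same edge directions. For $x\in B_u(k-1)$, the $P$-edge in a given direction either is a ray, in which case so is the $Q$-edge, or ends at some $y\in B_u(k)$. That $y$ is a vertex of $Q$ lying on the $Q$-edge, hence its other endpoint.

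Induction on $j$ then shows that the $Q$-ball of radius $j$ around $u$ equals $B_u(j)$ for all $j\le k$. So $\mathrm{dist}_Q(u,w)=k$, and $k\le \mathrm{diam}(Q)\le \Delta_u(d,\lfloor n/2\rfloor)$. Monotonicity of $\Delta_u$ in the number of facets is already needed at this point, since $Q$ may have fewer than $\lfloor n/2\rfloor$ facets.

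Finally, if $u$ itself lies on more than $\lfloor n/2\rfloor$ facets, no $k\ge 0$ qualifies. Take $k_u=-1$; the bound only improves. With these additions your proof is complete.
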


Using Lemma~\ref{lem:KK} and induction, Gallagher and Kim derived the following inequality~\cite{gallagher2016tail}.
\begin{lemma}[Gallagher--Kim {\cite[Lemma~6.1]{gallagher2016tail}}] \label{lem:GK}
    For $n\geq d\geq 3$,
    \begin{equation} \label{eq:GK}
        \Delta_u(d, n) \leq \sum_{i=0}^{\lfloor \log_2(n/d) \rfloor} 2^i \Delta_u(d-1, \lfloor n/2^i \rfloor) - 1.
    \end{equation}
\end{lemma}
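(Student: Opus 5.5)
The plan is to prove the Gallagher--Kim inequality by induction on $n$ for fixed $d\geq 3$, unrolling the Kalai--Kleitman recurrence of Lemma~\ref{lem:KK} along its second term. Let $K=\lfloor\log_2(n/d)\rfloor$.

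\textbf{Base case $K=0$.} Here $n<2d$. We must show $\Delta_u(d,n)\leq \Delta_u(d-1,n)-1$. The natural route is the standard facet-restriction bound $\Delta_u(d,n)\leq \Delta_u(d-1,n-1)+1$ for $n<2d$. Since $n<2d$ facets, any two vertices lie on a common facet: each vertex lies on at least $d$ facets, so two vertices account for at least $2d>n$ facet incidences. For pointed polyhedra the diameter of that common facet is at most $\Delta_u(d-1,n-1)$. This gives a $+1$ slack in the wrong direction relative to what we want, so the bound we actually need is
\[
\Delta_u(d,n)\leq \Delta_u(d-1,n-1)\leq \Delta_u(d-1,n)-1 .
\]
The first inequality is the common-facet argument (the $+1$ is not needed, since the path stays inside the facet). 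The second requires the monotonicity $\Delta_u(d-1,n)\geq \Delta_u(d-1,n-1)+1$ whenever $n>d-1$, i.e.\ that adding a facet can strictly increase the maximum diameter. I expect this to be standard: take a polyhedron achieving $\Delta_u(d-1,n-1)$ and cut off a vertex at one end of a diametral pair by a new facet, which increases the distance by one. Verifying this carefully, including degenerate small cases such as $\Delta_u(d-1,d-1)=0$, is where I expect the main friction. If it fails in edge cases, I would instead absorb the $-1$ by checking those small cases directly.

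\textbf{Inductive step $K\geq 1$.} Then $\lfloor n/2\rfloor\geq d$, so Lemma~\ref{lem:KK} applies:
\[
\Delta_u(d,n)\leq \Delta_u(d-1,n-1)+2\Delta_u(d,\lfloor n/2\rfloor)+2 .
\]
Let $m=\lfloor n/2\rfloor\geq d$. Because $\lfloor\lfloor n/2\rfloor/2^i\rfloor=\lfloor n/2^{i+1}\rfloor$ and $\lfloor\log_2(m/d)\rfloor=K-1$, the induction hypothesis gives
\[
\Delta_u(d,m)\leq \sum_{i=0}^{K-1}2^i\Delta_u(d-1,\lfloor n/2^{i+1}\rfloor)-1 .
\]
Multiplying by $2$ and adding $2$ yields exactly $\sum_{i=1}^{K}2^i\Delta_u(d-1,\lfloor n/2^i\rfloor)$, since the $-2$ cancels the $+2$. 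Adding $\Delta_u(d-1,n-1)\leq \Delta_u(d-1,n)-1$, by the same monotonicity fact as in the base case, supplies the $i=0$ term together with the final $-1$.

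So the whole proof reduces to two routine index computations plus the strict monotonicity $\Delta_u(d-1,n-1)\leq\Delta_u(d-1,n)-1$. That monotonicity is the one genuinely geometric input and is the step I would check first.
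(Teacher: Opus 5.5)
The gap is exactly the step you flagged, and it is not a technicality. Your outline matches the Kalai--Kleitman-plus-induction derivation the paper attributes to Gallagher--Kim: you unroll Lemma~\ref{lem:KK} along the halving term, with base case $\Delta_u(d,n)\le\Delta_u(d-1,n-1)$ for $d\le n<2d$. Your bookkeeping is also correct: $\lfloor\log_2(\lfloor n/2\rfloor/d)\rfloor=K-1$, the nested floors, and the $-2$ cancelling the $+2$. What fails is the strict inequality $\Delta_u(d-1,N-1)\le\Delta_u(d-1,N)-1$, which your argument uses at every level. It is what turns the accumulated constant $+2^{K+1}-2$ into the final $-1$, and your proposed proof of it is false.

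Cutting off an endpoint $v$ of a diametral pair $(u,v)$ need not increase the distance. If $v$ is simple and every neighbour $z$ of $v$ is at distance $\delta-1$ from $u$, each new vertex of the truncation facet is adjacent to some such $z$. A shortest $u$--$z$ path avoids $v$, so it survives the cut, and the new vertex lies within distance $\delta$ of $u$. The $3$-cube is a concrete case: it attains $\Delta_u(3,6)=3$, yet truncating a vertex gives a polytope with $7$ facets and diameter $3$. More structurally, your recipe never uses unboundedness, so it would also give $\Delta_b(3,7)\ge\Delta_b(3,6)+1$. That contradicts Theorem~\ref{thm:diam-3b}, which gives $\Delta_b(3,6)=\Delta_b(3,7)=3$.

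What your route needs is strict monotonicity $\Delta_u(d',N-1)<\Delta_u(d',N)$ for pointed polyhedra, with $d'=d-1$ and every $N>d'$. It must be proved by an argument that uses unbounded edges essentially, or taken from the literature. Checking small cases cannot replace it, because the inductive step uses it at $N=\lfloor n/2^i\rfloor\ge 2d$, arbitrarily large relative to $d'$.

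Some cases are covered:
\begin{itemize}
\item For $d\in\{3,4\}$ you can read it off $\Delta_u(2,N)=N-2$ and $\Delta_u(3,N)=N-3$ (Theorem~\ref{thm:diam-3u}).
\item For general $d'$, the prism $P\times[0,1]$ together with the equality case of Theorem~\ref{thm:d-step} gives it for $d'<N\le 2d'$.
\end{itemize}
For $d\ge5$, however, the inductive step needs $N\ge 2d'+2$, which the prism argument does not reach.

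With only weak monotonicity $\Delta_u(d-1,N-1)\le\Delta_u(d-1,N)$, your bookkeeping proves $\Delta_u(d,n)\le\sum_{i=0}^{K}2^i\Delta_u(d-1,\lfloor n/2^i\rfloor)+2^{K+1}-2$, not the stated bound with $-1$.
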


Our argument recursively expands \eqref{eq:GK} until only dimension-three terms remain, where the Hirsch bound holds for both polytopes and pointed polyhedra; see Theorems~\ref{thm:diam-3b} and~\ref{thm:diam-3u}.

\begin{theorem}[Klee \cite{klee1966paths}] \label{thm:diam-3b}
    \[
        \Delta_b(3, n) = \left\lfloor \frac{2n}{3} \right\rfloor - 1.
    \]
\end{theorem}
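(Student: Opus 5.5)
The plan is to prove the two inequalities separately: the upper bound $\Delta_b(3,n)\le\lfloor 2n/3\rfloor-1$ by a Menger-type counting argument, and the matching lower bound by an explicit family of $3$-polytopes.

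\emph{Upper bound.} First I reduce to simple polytopes via the standard perturbation argument. Moving the facet-defining hyperplanes of a $3$-polytope slightly into general position gives a simple polytope with the same number $n$ of facets. Each vertex of the original polytope splits into a connected cluster of vertices, and the original graph is recovered by contracting these clusters. Since contracting edges does not increase distances, the diameter does not decrease, so it suffices to treat simple polytopes.

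For a simple $3$-polytope with $n$ facets, Euler's formula together with $2E=3V$ gives $V=2n-4$ vertices. Fix two vertices $u\neq v$. By Balinski's theorem the graph is $3$-connected, so Menger's theorem gives three internally vertex-disjoint $u$--$v$ paths. Together they use at most $V-2=2n-6$ interior vertices, so the shortest of them has at most $\lfloor (2n-6)/3\rfloor$ interior vertices. Its length is therefore at most $\lfloor(2n-6)/3\rfloor+1=\lfloor 2n/3\rfloor-1$. The estimate rests on the exact vertex count $2n-4$, which is why the reduction to the simple case is needed.

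\emph{Lower bound.} I need, for each $n\ge4$, a $3$-polytope with $n$ facets whose diameter is exactly $\lfloor 2n/3\rfloor-1$. The small cases are checked directly: the tetrahedron ($n=4$, diameter $1$), the triangular prism ($n=5$, diameter $2$), and the cube ($n=6$, diameter $3$).

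For larger $n$ I would build an inductive "tube" construction. The invariant is a simple polytope with a distinguished vertex $u$ and a distinguished triangular facet $F$ whose three vertices all lie at distance equal to the diameter from $u$. The inductive step replaces a neighbourhood of $F$ by a new layer that adds $3$ facets and pushes the new far triangle $2$ steps further from $u$. Concretely, the step cuts near the three vertices and edges of $F$ so that all three disjoint "strands" from $u$ are lengthened by two vertices each. The three residues of $n \bmod 3$ are then handled by starting the induction from the tetrahedron, the prism, and the cube respectively.

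The main obstacle is realizing this layer-adding step convexly and verifying that it really increases the distance by $2$. If that fails, I would fall back on Klee's explicit examples, or take a dual description as a stacked triangulated sphere whose dual graph contains three long parallel strands, and check the distance by a direct cut argument. Such an argument would exhibit $\lfloor 2n/3\rfloor-2$ nested vertex separators between $u$ and the far triangle, each of which must be crossed.
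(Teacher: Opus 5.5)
The paper only cites this result from Klee and gives no proof, so your proposal has to be judged on its own.

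\textbf{Upper bound.} This half is correct. Perturbing to a simple polytope, using $V=2n-4$, applying Balinski and Menger, and the pigeonhole count together give exactly $\lfloor 2n/3\rfloor-1$. This is the classical argument.

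\textbf{The gap in the lower bound.} The problem is more basic than the convex realization you flag. Your invariant asks for a triangular facet $F$ all of whose vertices lie at distance $\lfloor 2n/3\rfloor-1$ from $u$. Two of your base cases already violate it. Every triangle of the prism contains $u$ or a neighbour of $u$, and the cube has no triangular facet at all.

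Choosing other bases cannot fix this. Apply your own count to $u$ and $F$. The fan version of Menger's theorem gives three paths from $u$ to the three vertices of $F$, meeting only at $u$. If every vertex of $F$ is at distance $\delta$, each path has at least $\delta-1$ interior vertices, so $2n-4\ge 3\delta+1$. With $\delta=\lfloor 2n/3\rfloor-1$, this forces $n\equiv 1\pmod 3$. Hence only the chain starting from the tetrahedron can run. Monotonicity does not recover the other residue classes: for $n=3k+2$ the target $2k$ exceeds the value $2k-1$ available at $n=3k+1$. The convex realization of your layer step is also left open.

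\textbf{One way to close it.} Build the class $n=3k$ directly and reach the other classes by truncation.

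For $k\ge 2$, let $Q_k$ be the simplicial polytope made of $k$ triangular layers. Consecutive layers are joined by triangular antiprism bands, and the two end triangles are caps; $Q_2$ is the octahedron. To realize it convexly, repeatedly place three points slightly above the top cap, forming a smaller concentric triangle rotated by $60^\circ$ that projects into the cap. These points lie beyond that facet only, and they replace it by one antiprism band and a new cap.

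Now assign levels in the dual graph of $Q_k$:
\begin{itemize}
\item the bottom cap gets level $0$;
\item band-$j$ triangles having an edge in layer $j$ get level $2j-1$;
\item band-$j$ triangles having an edge in layer $j+1$ get level $2j$;
\item the top cap gets level $2k-1$.
\end{itemize}
Every adjacency changes the level by exactly one. So the polar $Q_k^{*}$ has $3k$ facets and two vertices $u,v$ at distance at least $2k-1$.

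Truncating a vertex never shortens distances among the surviving vertices. Contracting the new triangle maps the new graph homomorphically onto the old one. So truncating any vertex other than $u,v$ handles $n=3k+1$.

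For $n=3k+2$, truncate $v$, and then truncate one vertex of the resulting triangle. The five vertices replacing $v$ attach to the rest of the graph only through the three former neighbours of $v$, each of which is at distance $2k-2$ from $u$. Each of these neighbours attaches to a different one of the five. The two replacing vertices adjacent to none of them are therefore at distance at least $2k$.

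Finally, the tetrahedron and the prism cover $n=4,5$.
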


\begin{theorem}[Kim--Santos {\cite[Proposition~3.10]{kim2010update}}] \label{thm:diam-3u}
    \[
        \Delta_u(3, n) = n - 3.
    \]
\end{theorem}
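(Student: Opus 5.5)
The statement has two parts: a construction for the lower bound $\Delta_u(3,n)\geq n-3$, and the Hirsch inequality for unbounded $3$-polyhedra for the upper bound. I expect the upper bound to carry all the difficulty.

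\emph{Lower bound.} I would take an unbounded pointed polygon in $\mathbb{R}^2$ with $m\geq 2$ edges (facets). Its vertex graph is a path on $m-1$ vertices, so its diameter is $m-2$. The prism over it is the product with a segment. This is a pointed $3$-polyhedron with $n=m+2$ facets. The diameter of a product graph is the sum of the factor diameters, so the prism has diameter $(m-2)+1=n-3$.

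\emph{Upper bound via non-revisiting paths.} I would fix vertices $u,v$ of a pointed $3$-polyhedron $P$ with $n$ facets, and assume $P$ is simple after a small perturbation of the right-hand sides; this does not decrease the diameter. The plan is to produce a \emph{non-revisiting} path from $u$ to $v$: once the path leaves a facet, it never returns to it. Such a path has length at most $n-3$. At each step of a simple polyhedron, exactly one facet is left and one facet is entered. The entered facet is new because of non-revisiting, so it is not among the facets containing the start vertex and not among those already entered. Hence each step consumes one of the at most $n-3$ facets that do not contain $u$, which bounds the number of steps.

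\emph{Existence of non-revisiting paths (main obstacle).} This is where dimension three is used; non-revisiting paths fail in general dimension. I would argue by induction on $n$, with the following steps.
\begin{enumerate}
\item If $u$ and $v$ share a facet $F$, then $F$ is an unbounded or bounded pointed polygon. Its boundary path or cycle joins $u$ to $v$ inside $F$, staying in the other facet through each edge and never returning to a left facet, because the facets adjacent to $F$ along its boundary are distinct.
\item Otherwise, I would take a facet $F$ containing $v$ and move from $u$ along an edge entering a facet that contains $v$, or one that ``separates'' $u$ from $v$.
\item To make this rigorous, I would use the planar structure. The graph of $P$ together with the unbounded direction, after a projective transformation sending $P$ to a polytope $Q$ with one extra facet $F_\infty$, is a $3$-connected planar graph by Steinitz's theorem. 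Paths for $P$ are then paths in $Q$ avoiding $F_\infty$.
\item Within this planar embedding, I would apply Klee's dimension-three argument. Delete a facet not containing $v$ and not among those the path currently touches, that is, relax its inequality. Apply induction to get a non-revisiting path in the relaxed polyhedron. Then reroute around the deleted facet along its boundary polygon, which is a single contiguous detour and therefore preserves the non-revisiting property.
\end{enumerate}
Checking that this rerouting never re-enters an abandoned facet, and that it stays away from $F_\infty$ in the unbounded case, is the step I expect to require the most care. I would first try handling it by the Jordan curve theorem applied to the face cycle of the deleted facet in the planar embedding.
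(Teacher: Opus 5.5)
The paper gives no proof of this statement; it quotes it from Kim--Santos, so your argument has to stand on its own. There is a genuine gap: the key step, the existence of non-revisiting paths in (possibly unbounded) $3$-polyhedra, is not proved.

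\textbf{What is fine.} Your lower bound is correct: take the prism over an unbounded polygon with $n-2$ edges, and note that $n=3$ is a single vertex. Your counting step is also correct: in a simple $3$-polyhedron, each step of a non-revisiting path enters a facet that neither contains $u$ nor was entered before, so the length is at most $n-3$. One small fix in Step~1: for a bounded facet $F$, take the shorter arc of $\partial F$. If you walk all but one edge, the last step re-enters the neighbour you left at the first step.

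\textbf{The gap.} Steps~2--4 do not establish the existence of non-revisiting paths in every simple pointed $3$-polyhedron, and that is the entire content of the upper bound. Planarity gives nothing by itself, and the facet-deletion induction fails as written.
\begin{enumerate}
\item $H$ must avoid both $u$ and $v$. If $u\in H$, then $u$ is not a vertex of the relaxed polyhedron $P'$. Such an $H$ need not exist, for example for antipodal vertices of a cube.
\item The clause ``not among those the path currently touches'' refers to a path you have not yet built.
\item Most seriously, rerouting need not preserve non-revisiting. Since $H$ is not a facet of $P'$, the inductive path may cross its hyperplane repeatedly. You can collapse everything between the first entry and the last exit into one detour along $\partial H$; this keeps $H$ contiguous. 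But the detour runs along edges $H\cap G$ of facets the excursion never touched.
\end{enumerate}
To see the failure in (3), let $G_1,\dots,G_m$ be the neighbours of $H$ in cyclic order. Let the entry vertex be $H\cap G_1\cap G_2$ and the exit vertex $H\cap G_k\cap G_{k+1}$. One arc enters and leaves $G_3,\dots,G_{k-1}$, and the other enters and leaves $G_{k+2},\dots,G_m$. Suppose the part of the path before the detour met a facet of the first group, and the part after it meets a facet of the second group. Then both arcs revisit a facet. Nothing in the inductive hypothesis excludes this. The Jordan curve theorem only tells you that the excursion separates the cut-off cap.

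\textbf{A way to close the gap: dualize.} Let $\Delta$ have the facets of $P$ as vertices and the vertices of $P$ as triangles.
\begin{itemize}
\item For a polytope, $\Delta$ is a $2$-sphere. Delete one triangle other than the two endpoints; this leaves a disk.
\item For unbounded $P$, choose $c$ with $c\cdot r>0$ on all nonzero recession directions and $M$ large. Then $\Delta$ is the dual sphere of $P\cap\{c\cdot x\le M\}$ minus the open star of the new facet, which is again a disk.
\end{itemize}
In either case you have a triangulated disk. A non-revisiting path becomes a sequence of edge-adjacent triangles in which the triangles containing each vertex are consecutive.

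Such a sequence exists between any two triangles, by induction on the number of triangles.
\begin{itemize}
\item If there is a chord $ab$, it splits $\Delta$ into two smaller disks meeting only in $ab$. Recurse in one side. If the endpoints lie on different sides, concatenate the two sequences through the two triangles at $ab$. Only $a$ and $b$ are shared, and their consecutive blocks glue together.
\item If there is no chord and more than one triangle, each of the $\ge 3$ boundary edges lies in its own triangle, whose third vertex is interior. Delete one such triangle that is not an endpoint. What remains is still a disk, and a valid sequence in a subcomplex is valid in $\Delta$.
\end{itemize}
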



The following theorem shows that the diameter can be bounded in terms of a new dimension and number of facets determined entirely by the excess $n-d$.
\begin{theorem}[Klee--Kleinschmidt {\cite[Statement~2.5]{klee1987d}}] \label{thm:d-step}
    For all $n > d$,
    \begin{equation} \label{eq:d-step}
        \begin{aligned}
            \Delta_u(d, n) & \leq \Delta_u(n-d, 2(n-d)) \\
            \Delta_b(d, n) & \leq \Delta_b(n-d, 2(n-d)),
        \end{aligned}
    \end{equation}
    with equalities when $n \leq 2d$.
\end{theorem}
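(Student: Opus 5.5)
We would prove the two inequalities in \eqref{eq:d-step} by two elementary moves on pairs $(d,n)$ that leave the excess $n-d$ unchanged. When $n<2d$ we will show $\Delta(d,n)\leq\Delta(d-1,n-1)$, and when $n>2d$ we will show $\Delta(d,n)\leq\Delta(d+1,n+1)$. Iterating whichever move applies ends exactly at $(n-d,2(n-d))$. For the equality when $n\leq 2d$, we run the second move upward from $(n-d,2(n-d))$ to $(d,n)$ to get the reverse inequality. The same argument applies to $\Delta_u$ and to $\Delta_b$.

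\emph{Step 0 (reduction to simple polyhedra).} We perturb the right-hand sides of the facet inequalities generically. A small generic perturbation gives a simple polyhedron with the same dimension, the same number of facets, and the same pointedness or boundedness. Each vertex of the original polyhedron splits into a connected cluster of vertices of the perturbed one, and each edge survives, so the diameter does not decrease. Hence we may assume every vertex lies on exactly $d$ facets.

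\emph{Step 1 (the case $n<2d$).} Take two vertices $u,v$ of a simple $P$. Each lies on $d$ facets, and $2d>n$, so by pigeonhole they share a facet $F$. This facet $F$ is a pointed (or bounded) polyhedron of dimension $d-1$. Its facets are intersections $F\cap F'$ with other facets $F'$ of $P$, so it has at most $n-1$ facets. The graph of $F$ is a subgraph of the graph of $P$, so $\operatorname{dist}_P(u,v)\leq \operatorname{dist}_F(u,v)\leq \Delta(d-1,m)$ for some $m\leq n-1$. To reach $\Delta(d-1,n-1)$ we need monotonicity of $\Delta$ in the number of facets. We would get this with the wedge construction of Step 2 together with the induction on $n-d$, or else quote it as the standard fact from Klee--Kleinschmidt.

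\emph{Step 2 (the case $n>2d$, and the equality).} Given $P$ with $n$ facets in dimension $d$, pick a facet $F$ and form the wedge $W=\{(x,t): x\in P,\ 0\leq t\leq \ell(x)\}$. Here $\ell\geq 0$ is an affine function on $P$ vanishing exactly on $F$; for unbounded $P$ we choose $\ell$ so that $W$ stays pointed. Then $W$ has dimension $d+1$ and $n+1$ facets: the two copies of $P$ (the floor $t=0$ and the roof $t=\ell$) plus the wedges over the remaining $n-1$ facets. The projection $W\to P$ maps each edge to an edge or to a single vertex, so $\operatorname{diam}(W)\geq\operatorname{diam}(P)$. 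This gives $\Delta(d,n)\leq\Delta(d+1,n+1)$. Iterating it from $(n-d,2(n-d))$ up to $(d,n)$, and combining with Step 1, yields equality whenever $n\leq 2d$.

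We expect the main obstacle to be the bookkeeping rather than the geometry. The steps needing care are monotonicity in $n$ in Step 1, the facet count of the wedge (when $F$ is adjacent to all other facets, the side facets must be checked to remain facets), and keeping pointedness in the unbounded wedge. We would handle the wedge first, using it to supply monotonicity, and then run the pigeonhole argument.
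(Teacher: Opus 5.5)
\paragraph{Overall.} The paper does not prove this statement; it quotes it from Klee--Kleinschmidt. Your outline is the standard argument behind that citation: a simple perturbation, the pigeonhole shared-facet argument for $n<2d$, the wedge giving $\Delta(d,n)\le\Delta(d+1,n+1)$, and running the wedge upward to get equality.

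\paragraph{The steps you flagged are fine.} Take $\ell$ to be the slack $b_F-a_Fx$ of the inequality defining $F$. Then $W$ is automatically pointed when $P$ is, since a lineality direction $(y,s)$ of $W$ forces $y=0$ and then $s=0$. Each side set over a facet $G\neq F$ is $d$-dimensional because $\ell\not\equiv 0$ on $G$, so $W$ has exactly $n+1$ facets. In Step~0, what you actually use is that the limit map from perturbed vertices onto original vertices is surjective and sends edges to edges or points. The phrase ``each edge survives'' describes a different property.

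\paragraph{The gap: monotonicity in the number of facets.} Step~1 needs monotonicity in $n$, and your proposed source for it, the wedge plus induction on $n-d$, cannot supply it. The wedge over a facet raises $d$ and $n$ together and preserves the excess $n-d$. Passing to a shared facet preserves or lowers the excess. Any chain of these two moves starting from $\Delta(d-1,m)$ therefore only yields bounds by quantities of excess at most $m-d+1$. It can never reach $\Delta(d-1,n-1)$ when $m<n-1$. Inducting on the excess does not help: it would require $\Delta(e',2e')\le\Delta(e,2e)$ for $e'<e$, which is again an excess-raising comparison.

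\paragraph{How to repair it.} You need a construction that adds a facet without adding a dimension. The standard one is vertex truncation:
\begin{itemize}
\item Cut a vertex $w$ of a simple polyhedron $Q$ by a hyperplane very close to $w$.
\item The result $Q'$ has the same dimension and is bounded (resp.\ pointed) iff $Q$ is.
\item $Q'$ has exactly one more facet, since every old facet keeps points far from $w$.
\item Contracting the new facet to $w$ maps the graph of $Q'$ onto that of $Q$, sending edges to edges or points. Hence distances do not decrease, and $\Delta(k,m)\le\Delta(k,m+1)$.
\end{itemize}
Alternatively, the prism $Q\times[0,1]$ has dimension $e+1$, $2e+2$ facets, and diameter $\operatorname{diam}(Q)+1$. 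This gives $\Delta(e,2e)<\Delta(e+1,2e+2)$. That suffices if you organize Step~1 as an induction on $d$ that bounds the shared facet's diameter by $\Delta(e',2e')$ with $e'\le n-d$; note $m\le n-1\le 2(d-1)$, so the facet stays in the range $n<2d$ or on the diagonal. With either addition the argument is complete.
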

We call \eqref{eq:d-step} a quantitative $d$-step reduction because it bounds the original diameter by one in dimension $n-d$ with $2(n-d)$ facets, precisely the classical $d$-step regime \cite{klee1967d}.


\section{A Curved-Path Refinement of the Diameter Bound} \label{sec:diam}

In this section, we derive asymptotic quasi-polynomial upper bounds for the diameters of pointed polyhedra. Note that each application of \eqref{eq:GK} reduces the dimension by one. We therefore iterate this inequality until the dimension reaches three and then estimate the resulting nested sums.

\subsection{Exact Curved-Path Encoding} \label{sec:path-encoding}

We regard each diameter term of dimension $q-1$ in the iterated expansion as being generated from a diameter term of dimension $q$ by selecting a summation index $i_q$ in \eqref{eq:GK}. Thus, $i_q$ records the dyadic reduction in the number of facets made when the dimension decreases from $q$ to $q-1$.

\begin{example}
    Consider expanding $\Delta_u(5, 10)$ down to dimension three. We have
    \[
        \Delta_u(5, 10) \leq 2^0\Delta_u(4, 10) + 2^1\Delta_u(4, 5) - 1,
    \]
    and
    \[
        \begin{aligned}
            \Delta_u(4, 10) & \leq 2^0\Delta_u(3, 10) + 2^1\Delta_u(3, 5) - 1 \\
            \Delta_u(4, 5) & \leq 2^0\Delta_u(3, 5) - 1.
        \end{aligned}
    \]
    Thus by Lemma~\ref{lem:GK}, we have
    \[
        \begin{aligned}
            \Delta_u(5, 10) & \leq 2^0\Delta_u(4, 10) + 2^1\Delta_u(4, 5) - 1 \\
            & \leq 2^0 \left[ 2^0\Delta_u(3, 10) + 2^1\Delta_u(3, 5) - 1 \right]
            + 2^1 \left[ 2^0\Delta_u(3, 5) - 1 \right]
            - 1 \\
            & = 2^0 \Delta_u(3, 10) + (2^1 + 2^1) \Delta_u(3, 5) - (2^0 + 2^1) - 1 \\
            & \leq 2^0 \Delta_u(3, 10) + (2^1 + 2^1) \Delta_u(3, 5).
        \end{aligned}
    \]
    Here $\Delta_u(3, 10)$ comes from the reduction
    \[
        \Delta_u(5, 10) \to 2^0 \Delta_u(4, 10) \to 2^{0+0} \Delta_u(3, 10),
    \]
    which is encoded by $(i_5,i_4)=(0,0)$.
    
    The two copies of $2^1 \Delta_u(3, 5)$ arise from the two reduction paths
    \[
        \Delta_u(5, 10) \to 2^0 \Delta_u(4, 10) \to 2^{0+1} \Delta_u(3, 5),
    \]
    and
    \[
        \Delta_u(5, 10) \to 2^1 \Delta_u(4, 5) \to 2^{1+0} \Delta_u(3, 5).
    \]
    These paths are encoded by $(i_5, i_4)=(0, 1)$ and $(i_5, i_4)=(1, 0)$, respectively.
\end{example}

The preceding example suggests encoding each recursion branch by recording the dimensions at which halvings occur. More precisely, if the summation index at dimension $q$ takes the value $i_q$, we record the label $q$ exactly $i_q$ times. This produces a finite weakly decreasing sequence, which we call a curved path. We refer to the terminal dimension-three term reached by a recursion branch as its leaf. The following lemma makes the correspondence between curved paths and recursion leaves precise.

\begin{lemma}[Exact curved-path encoding] \label{lem:path-encoding}
    After discarding the negative internal terms and expanding the positive terms in \eqref{eq:GK} down to dimension three, recursion leaves are in bijection with finite weakly decreasing sequences satisfying
    \begin{equation} \label{eq:curved}
        d \geq \kappa_1 \geq \kappa_2 \geq \cdots \geq \kappa_J\geq 4,
        \qquad \kappa_j2^j \leq n,\ \forall j.
    \end{equation}
    If the sequence has length $J$, the corresponding leaf contributes $2^J\Delta_u(3, \lfloor n/2^J \rfloor)$.
\end{lemma}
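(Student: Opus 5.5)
The plan is to describe the full recursion tree generated by repeatedly applying \eqref{eq:GK}, and to show by induction on the depth that every node is determined by its history of halvings. The key observation is that the state at every node has a simple closed form.

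Start at the root $\Delta_u(d,n)$. At a node of dimension $q$ with $q\geq 4$, apply Lemma~\ref{lem:GK} in dimension $q$. The children are indexed by $i_q\in\{0,\dots,\lfloor\log_2(m/q)\rfloor\}$, where $m$ is the current number of facets, and each child has weight $2^{i_q}$. I would prove by induction on $d-q$ the following claim. After choosing $i_d,\dots,i_{q+1}$ with $s=i_d+\cdots+i_{q+1}$, the node is exactly $2^{s}\Delta_u(q,\lfloor n/2^{s}\rfloor)$, and it satisfies $q\,2^{s}\leq n$. The form of the node follows from the identity $\lfloor\lfloor n/2^s\rfloor/2^{i}\rfloor=\lfloor n/2^{s+i}\rfloor$ and from the fact that weights multiply along a branch. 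The condition $q2^s\le n$ gives $\lfloor n/2^s\rfloor\geq q\geq 3$, so Lemma~\ref{lem:GK} is applicable at every internal node. The negative constants $-1$ are discarded; this only weakens the upper bound, because all coefficients are positive and $\Delta_u\geq 0$.

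Next I would translate the admissibility range of $i_q$. Since $q2^{i}$ is an integer, $i\leq\log_2(\lfloor n/2^s\rfloor/q)$ holds iff $q2^{i}\leq\lfloor n/2^s\rfloor$, which in turn holds iff $q2^{s+i}\leq n$. Now build the sequence $\kappa$ by writing label $q$ exactly $i_q$ times, for $q=d,d-1,\dots,4$. The copies of $q$ then occupy positions $j=s+1,\dots,s+i_q$. The condition $\kappa_j2^j\leq n$ for these positions is equivalent to the single condition at the largest one, $j=s+i_q$, because $2^j$ is increasing in $j$. This is exactly the admissibility condition for $i_q$. Hence a branch is admissible at every step iff its sequence satisfies \eqref{eq:curved}. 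The bounds $d\geq\kappa_1$ and $\kappa_J\geq 4$ hold because labels only come from dimensions $d,\dots,4$; halvings at dimension $3$ never occur, since the expansion stops there.

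For the bijection, note that a weakly decreasing sequence with entries in $\{4,\dots,d\}$ determines every multiplicity $i_q$, so it determines the branch. Conversely, each branch gives such a sequence, and distinct branches give distinct multiplicity vectors. At a leaf, $J=\sum_q i_q$, so the closed form above gives the contribution $2^J\Delta_u(3,\lfloor n/2^J\rfloor)$.

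The only delicate step is the floor bookkeeping. We must check that the nested floors collapse, and that the integer-rounding equivalence $q2^i\le\lfloor n/2^s\rfloor \iff q2^{s+i}\le n$ holds. We must also confirm that Lemma~\ref{lem:GK}'s hypothesis $n\geq d\geq 3$ holds at every node, including the case where a child has fewer facets than its parent's dimension. The invariant $q2^s\le n$, which passes to the child with $q-1<q$, handles this last point.
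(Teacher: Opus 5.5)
Your proposal is correct and follows essentially the same route as the paper: the nested-floor identity reduces admissibility of each index to $q2^{S_q}\le n$, the sequence is built from multiplicities, the curved constraint on the copies of label $q$ reduces to the one at the last position $S_q$, and the weights multiply to $2^J$. Your running invariant $q2^{s}\le n$ replaces the paper's case split ($S_q>0$ via $\kappa_{S_q}\ge q$, and $S_q=0$ via $q\le d\le n$) for showing that labels with $i_q=0$ impose no extra condition. It also makes explicit that Lemma~\ref{lem:GK} is applicable at every internal node.
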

\begin{proof}

    For $q = d, d-1,\ldots, 4$, let $i_q \geq 0$ denote the summation index selected when \eqref{eq:GK} is applied at dimension $q$, and set $ S_{q} = \sum_{t=q}^d i_t$, $S_{d+1} = 0 $. Before $i_q$ is selected, the current number of facets is $\lfloor n / 2^{S_{q+1}} \rfloor$.

    We call an index vector admissible if every index lies in the corresponding summation range in \eqref{eq:GK}, i.e.,
    \[
        0 \leq i_q \leq \left\lfloor \log_2 \left( \frac{\lfloor n/2^{S_{q+1}} \rfloor}{q} \right) \right\rfloor,\qquad 4 \leq q\leq d.
    \]
    Each admissible index vector determines a unique recursion leaf, and every recursion leaf arises in this way.

    The nested-floor identity
    \[
        \left\lfloor\frac{\lfloor n/2^s\rfloor}{2^i}\right\rfloor
        = \left\lfloor\frac n{2^{s+i}}\right\rfloor
    \]
    shows that the admissibility conditions are equivalent to
    \[
        q 2^{S_q} \leq n,\qquad 4 \leq q \leq d.
    \]
    Given an admissible index vector, form a sequence by writing the label $q$ exactly $i_q$ times, in decreasing order of $q$. The resulting sequence
    \[
        \kappa_1 \geq \kappa_2 \geq \cdots \geq \kappa_J
    \]
    is weakly decreasing and has length $J = S_4 = \sum_{t=4}^d i_t$. Moreover, $\kappa_j 2^j \leq \kappa_j 2^{S_{\kappa_j}} \leq n$ for all $j$. Thus, the sequence satisfies \eqref{eq:curved}.

    Conversely, given a sequence satisfying \eqref{eq:curved}, let $i_q$ be the multiplicity of $q$ in the sequence. This uniquely determines the index vector $(i_d,\ldots,i_4)$. Then $S_q = \sum_{t=q}^d i_t$ is precisely the number of entries in the sequence that are at least $q$. If $S_q > 0$, then $\kappa_{S_q} \geq q$ and $q 2^{S_q} \leq \kappa_{S_q} 2^{S_q} \leq n$. If $S_q = 0$, the same inequality follows from $q \leq d\leq n$. Thus the reconstructed index vector is admissible, completing the bijection.

    Finally, since $\sum_{q=4}^d i_q=J$, the coefficients multiply to $\prod_{q=4}^d2^{i_q} = 2^J$, and the nested-floor identity gives the terminal facet count $\lfloor n/2^J\rfloor$. Thus the corresponding leaf contributes $2^J\Delta_u(3, \lfloor n/2^J \rfloor)$.

    
\end{proof}
    

The word ``curved'' refers to the position-dependent constraint
\[
    \kappa_j2^j \leq n,
\]
which requires the encoded path to lie not merely inside the rectangular region imposed by the uniform dimension-only constraint $\kappa_j\leq d$, but also below the exponentially decreasing boundary $\kappa_j 2^j = n$.

The curved-path counting argument is inspired by the iterated-recurrence analysis of Gallagher and Kim~\cite{gallagher2016tail}, but retains more of the dimension-dependent structure. Their counting step effectively replaces the running dimension label $\kappa\geq4$ by the uniform lower bound $4$, thereby removing its dependence on the recursion depth. We instead keep the exact constraint $\kappa_j2^j\leq n$, so that the admissible paths lie below a decreasing curved boundary. Preserving this coupling between dimension and recursion depth is the source of our improvement.

\subsection{An Improved Uniform Bound for Pointed Polyhedra} \label{sec:diam-u}

In this section, we prove Theorem~\ref{thm:diam-u} by bounding the number of admissible curved paths.

\begin{theorem}[Curved-path bound] \label{thm:curve-bd}
    For every $d \geq 4$ and $n \geq 2d$,
    \begin{equation} \label{eq:delta-path-bound}
        \Delta_u(d, n) \leq n \binom{d + K}{K}(K + T_d),
    \end{equation}
    where $K = \left\lfloor \log_2 \frac{n}{d} \right\rfloor$ and $T_d$ is defined in \eqref{eq:T}.
\end{theorem}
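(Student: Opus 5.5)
Combine Lemma~\ref{lem:path-encoding} with Theorem~\ref{thm:diam-3u}. Each leaf is encoded by a curved path $\kappa_1\ge\cdots\ge\kappa_J$ satisfying \eqref{eq:curved}, and it contributes $2^J\Delta_u(3,\lfloor n/2^J\rfloor)\le 2^J\lfloor n/2^J\rfloor\le n$. Hence
\[
\Delta_u(d,n)\le n\cdot N(d,n),
\]
where $N(d,n)$ is the number of admissible curved paths. The whole proof then reduces to the counting estimate $N(d,n)\le\binom{d+K}{K}(K+T_d)$.

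To count, I split each path at position $K$.

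\textbf{Head.} The first $\min(J,K)$ entries form a weakly decreasing sequence with values in $\{4,\dots,d\}$, and its length is between $0$ and $K$. Such sequences are multisets of size at most $K$ drawn from $d-3$ labels. Their number is at most $\binom{d-3+K}{K}\le\binom{d+K}{K}$.

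\textbf{Tail.} The remaining entries $\kappa_{K+1}\ge\cdots\ge\kappa_J$ are where the curved constraint matters. Write $J=K+L$. The constraint gives $\kappa_{K+l}\le n/2^{K+l}<2d/2^{l}$, since $2^{K}>n/(2d)$. So the tail is a weakly decreasing sequence of length $L$ whose $l$-th entry is below $2^{1-l}d$ and at least $4$. This forces $L\le\log_2 d$.

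\textbf{Combining.} The number of weakly decreasing sequences of length $L$ from a label set of size $m$ is $\binom{m+L-1}{L}\le m^L/L!$. A direct crude bound using $m\le d$ gives $d^L/L!$, which is not what we want. I would instead exploit the shrinking ranges position by position: choosing $\kappa_{K+l}$ among fewer than $2^{1-l}d$ values gives at most $\prod_{l=1}^L 2^{1-l}d=d^L2^{-L(L-1)/2}$ tails of length $L$. Summing over $L\ge0$ gives $T_d$.

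So paths with $J\ge K$ number at most $\binom{d+K}{K}T_d$: the head is a full length-$K$ multiset, counted by $\binom{d+K}{K}$, times the tail factor. The additive $K$ covers paths with $J<K$. There are at most $\sum_{J<K}\binom{d-4+J}{J}$ of them, and since $\binom{d-4+J}{J}\le\binom{d+K}{K}$ for each $J<K$, this sum is at most $K\binom{d+K}{K}$. Adding the two pieces yields $\binom{d+K}{K}(K+T_d)$.

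\textbf{Main obstacle.} The delicate step is the tail product bound. First, the head and tail are not independent, since the tail entries must not exceed $\kappa_K$. Ignoring this only overcounts, so it is harmless. Second, the per-position ranges must be checked carefully with the floors. I would verify that $\kappa_{K+l}\le\lfloor n/2^{K+l}\rfloor\le 2^{1-l}d$ holds using $n<2^{K+1}d$, which gives at most $2^{1-l}d-3\le 2^{1-l}d$ admissible values. The product of these counts is then exactly the $L$-th term $d^L2^{-L(L-1)/2}$ of $T_d$.
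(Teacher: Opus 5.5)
Your proposal is correct and follows essentially the same argument as the paper. Like the paper, you bound each leaf's weight by $n$, charge the paths with $J<K$ at most $K\binom{d+K}{K}$, and for $J=K+L$ multiply a $\binom{d+K}{K}$ head count by the unordered tail product $\prod_{\ell=1}^{L} d\,2^{1-\ell}$, which sums over $L\ge 0$ to $T_d$.
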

\begin{proof}

    A path with $J$ halvings has terminal weight at most
    \begin{equation} \label{eq:coeff-n}
        2^J\Delta_u(3, \lfloor n/2^J \rfloor) \leq 2^J \lfloor n/2^J \rfloor \leq n,
    \end{equation}
    where the first inequality comes from the fact that the Hirsch conjecture holds at $d=3$.

    For $J < K$, the constraint $\kappa_j 2^j\leq n$ holds automatically, since
    \[
        \kappa_j2^j \leq d 2^K \leq n,\qquad 1 \leq j \leq J.
    \]
    Hence, the number of weakly decreasing sequences of $J$ dimensions equals the number of nonnegative integer solutions to
    \[
        \sum_{q=4}^{d} i_q = J,
    \]
    which is $\binom{d+J-4}{J}$. We have
    \begin{equation} \label{eq:number-paths}
        \binom{d+J-4}{J} = \binom{d+J-4}{d-4} \leq \binom{d+K-4}{d-4} = \binom{d+K-4}{K} \leq \binom{d+K}{K}.
    \end{equation}
    There are $K$ possible values of $J$.

    For $J = K+L$, the first $K$ dimensions have at most the same number of possibilities in \eqref{eq:number-paths}. Since $n \leq d 2^{K+1}$ by the definition of $K$, condition~\eqref{eq:curved} gives
    \[
        \kappa_{K+\ell} < d 2^{1 - \ell},\qquad 1 \leq \ell\leq L.
    \]
    Dropping the ordering among these last $L$ dimensions bounds their number by
    \begin{equation} \label{eq:last-L}
        \prod_{\ell = 1}^{L} d 2^{1 - \ell} = d^L 2^{\sum_{\ell=1}^L(1-\ell)} = d^L 2^{-L(L-1)/2}.
    \end{equation}
    Summing over all possible $J$, by \eqref{eq:coeff-n}, \eqref{eq:number-paths} and \eqref{eq:last-L}, we have
    \[
        \Delta_u(d, n) \leq n \binom{d+K}{K} K + n \binom{d+K}{K} \sum_{L\geq 0}d^L 2^{-L(L-1)/2} = n \binom{d+K}{K} (K + T_d).
    \]
    
\end{proof}

For convenience, write $D = \ln d$ and $a = \ln 2$. Then we bound $\ln T_d$ using Lemma~\ref{lem:lnT}.
\begin{lemma} \label{lem:lnT}
    As $\sigma \to \infty$,
    \begin{equation} \label{eq:lnT}
        \ln T_\sigma = \frac{(\ln \sigma)^2}{2a} + \frac{\ln \sigma}{2} + O(1),
    \end{equation}
    where the constant implicit in $O(1)$ is absolute.
\end{lemma}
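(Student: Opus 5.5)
We prove the asymptotic expansion of $\ln T_\sigma$ by a Laplace-type analysis of the terms of $T_\sigma=\sum_{L\ge0}\sigma^L2^{-L(L-1)/2}$. Write the $L$-th term as $e^{f(L)}$ with
\[
f(x)=x\ln\sigma-\frac{a}{2}x(x-1)=-\frac a2\Bigl(x-x_0\Bigr)^2+\frac{a}{2}x_0^2,\qquad x_0=\frac{\ln\sigma}{a}+\frac12 .
\]
Completing the square gives
\[
\frac a2x_0^2=\frac{(\ln\sigma)^2}{2a}+\frac{\ln\sigma}{2}+\frac a8 .
\]
Hence every term satisfies
\[
\sigma^L2^{-L(L-1)/2}=\exp\Bigl(\frac{(\ln\sigma)^2}{2a}+\frac{\ln\sigma}{2}+\frac a8\Bigr)\,e^{-\frac a2(L-x_0)^2}.
\]
The problem therefore reduces to showing that the discrete Gaussian sum $S(x_0)=\sum_{L\ge0}e^{-\frac a2(L-x_0)^2}$ lies between two positive absolute constants once $\sigma$ is large. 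Taking logarithms then gives \eqref{eq:lnT}, with the $O(1)$ term equal to $\frac a8+\ln S(x_0)$.

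For the upper bound, we extend the sum to all $L\in\mathbb Z$. For any real $x_0$, the sum $\sum_{L\in\mathbb Z}e^{-\frac a2(L-x_0)^2}$ is at most $2\sum_{k\ge0}e^{-\frac a2k^2}$. This holds because the integers at distance at least $k$ from $x_0$ number at most two for each $k$ on the shifted lattice: group the integers by their distance $|L-x_0|\in[k,k+1)$. The resulting bound is an absolute constant $C_1<\infty$, and it holds for every $\sigma>0$.

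For the lower bound, we take $\sigma\ge1$, so that $x_0\ge\frac12$. Then the nearest nonnegative integer $L^*$ to $x_0$ satisfies $|L^*-x_0|\le\frac12$. That single term already gives $S(x_0)\ge e^{-a/8}$.

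Combining the two bounds, $-\frac a8\le\ln S(x_0)\le\ln C_1$. Hence
\[
\ln T_\sigma-\frac{(\ln\sigma)^2}{2a}-\frac{\ln\sigma}{2}\in\Bigl[0,\;\frac a8+\ln C_1\Bigr]\qquad(\sigma\ge1),
\]
which is uniform in $\sigma$ and absolute. This is slightly stronger than the asymptotic statement.

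The only mildly delicate step is making the constant in the upper bound uniform in the fractional part of $x_0$. Pairing integers by their distance to $x_0$, as above, handles this cleanly; alternatively one can compare the sum with $\int e^{-\frac a2 t^2}\,dt$ plus the maximal term, since the summand is unimodal in $L$. Convergence of $T_\sigma$ is immediate because the terms decay like $e^{-\frac a2 L^2}$.
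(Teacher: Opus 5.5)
Your proof is correct and follows essentially the same route as the paper. Both complete the square to factor out $\exp\bigl(\frac{(\ln\sigma)^2}{2a}+\frac{\ln\sigma}{2}+\frac a8\bigr)$, bound the shifted Gaussian sum below by its nearest-integer term ($\ge e^{-a/8}$), and bound it above after extending to $\mathbb{Z}$. The only difference is the upper constant: the paper gets $1+\sqrt{2\pi/a}$ from unimodality and an integral comparison, while your grouping by $|L-x_0|\in[k,k+1)$ gives $2\sum_{k\ge0}e^{-\frac a2 k^2}$, which works equally well. One wording fix: your phrase ``at distance at least $k$'' should read ``at distance in $[k,k+1)$''; it is each such shell, not the whole tail, that contains at most two integers.
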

\begin{proof}
    We have
    \[
    \begin{aligned}
        T_\sigma & = \sum_{L\geq 0} \sigma^L 2^{-L(L-1)/2} \\
        & = \sum_{L\geq 0} e^{L\ln \sigma - \frac{a}{2}L(L-1)} \\
        & = e^{\frac{(\ln \sigma)^2}{2a} + \frac{\ln \sigma}{2} + \frac{a}{8}} \sum_{L\geq 0} e^{- \frac{a}{2}\left( L - \frac{\ln \sigma}{a} - \frac{1}{2} \right)^2}.
    \end{aligned}
    \]
    Thus,
    \[
        \ln T_\sigma = \frac{(\ln \sigma)^2}{2a} + \frac{\ln \sigma}{2} + \frac{a}{8} + \ln \left( \sum_{L\geq 0} e^{- \frac{a}{2}\left( L - \frac{\ln \sigma}{a} - \frac{1}{2} \right)^2} \right).
    \]

    Let
    \[
        \mu = \frac{\ln \sigma}{a}+\frac{1}{2}.
    \]
    For sufficiently large $\sigma$, we have $\mu > 0$. Hence, there exists an integer $L_0\geq 0$ such that $\left| L_0 - \mu \right| \leq \frac{1}{2}$, and
    \[
        \sum_{L\geq 0} e^{- \frac{a}{2}\left( L - \mu \right)^2} \geq e^{- \frac{a}{2} \left( L_0 - \mu \right)^2} \geq e^{-\frac{a}{8}} > 0.
    \]
    For the upper bound, $e^{- \frac{a}{2}\left( L - \mu \right)^2}$ increases over $L < \mu$ and decreases over $L > \mu$. Moreover, the larger of the two terms $e^{- \frac{a}{2}\left( \lfloor \mu \rfloor - \mu \right)^2}$ and $e^{- \frac{a}{2}\left( \lfloor \mu \rfloor + 1 - \mu \right)^2}$ is at most $1$, while the smaller is bounded by $\int_{\lfloor \mu \rfloor}^{\lfloor \mu \rfloor + 1} e^{- \frac{a}{2}\left( x - \mu \right)^2} \mathrm{d} x$. Thus,
    \[
    \begin{aligned}
        \sum_{L\geq 0} e^{- \frac{a}{2}\left( L - \mu \right)^2} & \leq \sum_{L\in\mathbb{Z}} e^{- \frac{a}{2}\left( L - \mu \right)^2} \\
        & = \sum_{L\leq \lfloor \mu \rfloor - 1} e^{- \frac{a}{2}\left( L - \mu \right)^2} 
        + e^{- \frac{a}{2}\left( \lfloor \mu \rfloor - \mu \right)^2} 
        + e^{- \frac{a}{2}\left( \lfloor \mu \rfloor + 1 - \mu \right)^2} 
        + \sum_{L\geq \lfloor \mu \rfloor + 2} e^{- \frac{a}{2}\left( L - \mu \right)^2} \\
        & \leq \int_{-\infty}^{\lfloor \mu \rfloor} e^{- \frac{a}{2}\left( x - \mu \right)^2} \mathrm{d} x 
        + 1
        + \int_{\lfloor \mu \rfloor}^{\lfloor \mu \rfloor + 1} e^{- \frac{a}{2}\left( x - \mu \right)^2} \mathrm{d} x 
        + \int_{\lfloor \mu \rfloor + 1}^{+\infty} e^{- \frac{a}{2}\left( x - \mu \right)^2} \mathrm{d} x \\
        & = 1
        + \int_{-\infty}^{+\infty} e^{- \frac{a}{2}\left( x - \mu \right)^2} \mathrm{d} x \\
        & = 1 + \sqrt{\frac{2\pi}{a}}.
    \end{aligned}
    \]
    Therefore, $\ln\left(\sum_{L\geq 0} e^{- \frac{a}{2}\left( L - \mu \right)^2}\right)$ is uniformly bounded, and \eqref{eq:lnT} holds.

\end{proof}

\begin{lemma} \label{lem:sup-Fdk}
    Let
    \begin{equation} \label{eq:def-FdK}
        F_d(K) = \frac{a\left[D + a(K+1) + \ln\binom{d+K}{K} + \ln(K+T_d)\right]}{D + \ln(2^K - 1)}.
    \end{equation}
    As $d \to \infty$,
    \begin{equation} \label{eq:sup-Fdk}
        \sup_{K\in\mathbb{Z}_{\geq 1}} F_d(K)
        = D - 2 \ln D + \ln(4a) + o(1).
    \end{equation}
\end{lemma}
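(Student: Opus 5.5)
The plan is to reduce $F_d(K)$ to an explicit one-variable function of $x=aK$, locate its maximizer at $x\approx D^2/2$, and then prove matching lower and upper bounds. The lower bound evaluates $F_d$ at a single well-chosen $K$. The upper bound is uniform over all $K\ge 1$, split into regimes.

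\emph{Asymptotic form.} By Lemma~\ref{lem:lnT} with $\sigma=d$, $\ln T_d = D^2/(2a)+D/2+O(1)$. Since $K\le T_d$ for every $K$ in the relevant range, $\ln(K+T_d)=\ln T_d+O(1)$ there. For $K=o(d)$, Stirling gives
\[
\ln\binom{d+K}{K}=K\bigl(D-\ln K+1\bigr)+O\bigl(K^2/d+\ln K\bigr).
\]
Put $x=aK$ and $c=1+a+\ln a$. Multiplying numerator and denominator of \eqref{eq:def-FdK} by $a$ (and using $\ln(2^K-1)=aK+O(2^{-K})$) gives
\[
F_d(K)=\frac{D^2/2+x\,(D-\ln x+c)+O(D+K^2/d+\ln K)}{D+x+O(2^{-K})}.
\]
This can be rewritten as
\[
F_d(K)=D-\ln x+c-\frac{D\,(D/2-\ln x+c)}{D+x}+\text{error}.
\]
For $D\ll x\ll d$ the main part is $D-\ln x+c-D^2/(2x)+o(1)$. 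Maximizing in $x$ gives $x^\ast=D^2/2$ and value
\[
D-2\ln D+\ln 2-1+c=D-2\ln D+\ln(4a),
\]
which is exactly the claimed constant.

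\emph{Lower bound.} Take $K=\lfloor D^2/(2a^2)\rfloor$. Then $K^2/d\to0$, $\ln K=O(\ln D)$, and $O(D)/x=O(1/D)\to 0$, so every error term is $o(1)$. Substituting into the displayed form yields $F_d(K)\ge D-2\ln D+\ln(4a)-o(1)$.

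\emph{Upper bound.} I would split the range of $K$ into three regimes.
\begin{enumerate}
\item \emph{Small $K$, say $K\le D^{3/2}$.} Use the crude bound $\ln\binom{d+K}{K}\le K(D+1)+O(K^2/d)$. Then the numerator is at most $D^2/2+O(xD+D)$ while the denominator is at least $D$. For $x\le D^{3/2}$ this gives $F_d\le D/2+O(D^{1/2})$, far below the target. A little care is needed at $K=1$, where $\ln(2^K-1)=0$, but then $F_d\le D/2+O(1)$.
\item \emph{Intermediate $K$, $D^{3/2}\le K\le d/D$, say.} Use the upper bound $\ln\binom{d+K}{K}\le K\ln(e(d+K)/K)$. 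Its error is only $O(K^2/d)$ relative to $K$, and after dividing by $D+x$ this is $O(K/d)=O(1/D)$. So $F_d(K)\le \phi(x)+o(1)$ with $\phi(x)=D-\ln x+c-D(D/2-\ln x+c)/(D+x)$. A direct calculus check shows $\sup_x\phi(x)=D-2\ln D+\ln(4a)+o(1)$: the derivative changes sign once, near $x=D^2/2$.
\item \emph{Large $K$, $K\ge d/D$.} Use $\ln\binom{d+K}{K}\le d\ln(e(d+K)/d)$ when $K\ge d$, and $\le K\ln(e(d+K)/K)$ otherwise. Also $\ln(K+T_d)\le \ln T_d+\ln(K+1)$. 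This gives
\[
F_d\le \frac{D^2/2+aK\,\ln(eD)+O(D)}{aK}=O(\ln D)\qquad\text{for } d/D\le K\le d,
\]
and even smaller for $K\ge d$. Both are below $D-2\ln D$ for large $d$.
\end{enumerate}

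The main obstacle is the uniformity of this bookkeeping: making every error term $o(1)$ after division by $D+aK$ in each regime, and choosing overlapping regime boundaries so that the Stirling errors $K^2/d$ stay controlled where the supremum actually lives. The unimodality check for $\phi$ is routine but should be done carefully, since the answer depends on the second-order constant $\ln(4a)$.
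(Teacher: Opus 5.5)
Your route differs from the paper's, and its core is sound. The paper expands $F_d$ by Stirling for $K<d$ and runs through five sub-regimes along subsequences. You instead bound $F_d$ by the explicit function $\phi(x)=\frac{D^2/2+x(D-\ln x+c)}{D+x}$, using the one-sided bound $\binom{d+K}{K}\le (e(d+K)/K)^K$, which has no $O(\ln K)$ term. The numerator of $\phi'$ is $D^2/2+(c-1)D-D\ln x-x$, which is strictly decreasing. So the middle-range supremum $\phi(x^\ast)=D-\ln x^\ast+c-1$, with $x^\ast=D^2/2-O(D\ln D)$, comes out uniformly in one step. However, two steps fail as written.

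\textbf{The lower bound uses the wrong $K$.} Since $x=aK$ and $x^\ast\approx D^2/2$, you need $K\approx D^2/(2a)$, not $D^2/(2a^2)$. Your choice gives $x\approx D^2/(2a)$, hence $D^2/(2x)\approx a$. The main part then equals $D-2\ln D+\ln(2a)+c-a=D-2\ln D+\ln(4a)-(a-1-\ln a)$, where $a-1-\ln a\approx 0.06>0$. The lemma is precisely about the constant $\ln(4a)$, so this is not a matching lower bound. Take $K=\lfloor D^2/(2a)\rfloor$, as the paper does (it uses the nearest integer to $D^2/(2a)$).

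\textbf{The small-$K$ regime is miscomputed.} Dividing a numerator $D^2/2+O(xD+D)$ by the lower bound $D$ gives only $D/2+O(x)$, which is $O(D^{3/2})$ at $x\asymp D^{3/2}$. The asserted $F_d\le D/2+O(D^{1/2})$ is in fact false there, because $F_d(K)=D-\Theta(\sqrt D)$ for $K\asymp D^{3/2}$. The regime is still harmless once you keep the full denominator $D+\ln(2^K-1)\ge D+x-a$. With $\ln\binom{d+K}{K}\le K(D+1)+K^2/d$, the numerator is at most $D^2/2+xD+O(x+D)$. Hence $F_d(K)\le D-\frac{D^2}{2(D+x)}+O(1)\le D-(1+o(1))\frac{\sqrt D}{2a}+O(1)$, which lies below $D-2\ln D+\ln(4a)$ for large $d$; at $K=1$ it reads $D/2+O(1)$.

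With these two corrections, and your intermediate and large-$K$ regimes as stated, the proof goes through.
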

\begin{proof}
    
    We divide the optimization into the two ranges $K<d$ and $K\geq d$, and begin with the former.
    
    \paragraph{Range $\bm{K < d}$.}
    For $K < d$, we have, uniformly,
    \begin{equation} \label{eq:binom}
        \begin{aligned}
            \ln \binom{d+K}{K} & = \sum_{j=1}^{K} \ln(d+j) - \ln (K!) \\
            & = KD + \sum_{j=1}^{K} \ln\left(1+\frac{j}{d}\right) - \ln (K!) \\
            & = KD + O\left( \frac{K^2}{d} \right) - K\ln K + K + O\left(\ln (K + 1)\right) \\
            & = K(D - \ln K + 1) + O\left( \ln(K + 1) + \frac{K^2}{d} \right).
        \end{aligned}
    \end{equation}
    The third equality comes from the elementary logarithmic bound $0 \leq \ln(1 + x) \leq x$ for all $x\geq 0$, and Stirling's formula $\ln (K!) = K\ln K - K + O\left(\ln (K + 1)\right)$.
    
    Moreover, from Lemma~\ref{lem:lnT}, $K = o(T_d)$ in this range gives
    \begin{equation} \label{eq:lnKTd}
        \ln (K + T_d) = \ln T_d + \ln \left( 1 + \frac{K}{T_d} \right) = \ln T_d + o(1),
    \end{equation}
    and $D + \ln(2^K - 1) = D + aK + O(1)$. Hence, whenever $K/D\to\infty$ and $K < d$, we have
    \begin{equation} \label{eq:FdK}
        F_d(K) = D - \ln K + 1 + a - \frac{D^2}{2(aK + D)} + O\left( \frac{D\ln K}{K} + \frac{K}{d} + \frac{\ln(K+1)}{K} \right).
    \end{equation}

    Throughout the following analysis, set
    \[
        y = \frac{aK}{D^2},
    \]
    and thus the principal part in \eqref{eq:FdK} becomes
    \[
        D - \ln K + 1 + a - \frac{D^2}{2(aK + D)} = D - 2\ln D + 1 + a + \ln a - \ln y - \frac{1}{2(y + 1/D)}.
    \]
    We now consider $K$ in increasing order of magnitude. Passing to a subsequence whenever necessary, it suffices to consider the following five regimes:
    \[
        K=O(D),\qquad
        D=o(K),\ K=o(D^2),\qquad
        K=\Theta(D^2),\qquad
        D^2=o(K),\ K\leq\frac{d}{D},\qquad
        \frac{d}{D}<K<d.
    \]
    
    \begin{itemize}
        \item Suppose $K = O(D)$. Uniformly on the range $K/D \leq c$ for some absolute constant $0 < c < \infty$, direct division using \eqref{eq:lnT} and \eqref{eq:binom} gives
        \[
            \frac{F_d(K)}{D} = 1 - \frac{1/2}{aK/D + 1} + o(1) \leq 1 - \frac{1/2}{ac + 1} + o(1).
        \]
        Thus $F_d(K)$ has a leading coefficient strictly smaller than one in this regime.
    
        \item Suppose that $K/D \to \infty$ and $K/D^2 \to 0$. Then $y\to 0$, and the error term in \eqref{eq:FdK} is $o(1/y)$. Indeed, multiplying each component of the error term by $y$ gives $o(1)$. Since $1/y$ grows faster than $-\ln y$, it follows that
        \[
            F_d(K)-(D-2\ln D) \to -\infty.
        \]
    
        \item Suppose that $K = \Theta(D^2)$. Then $y$ remains in a compact subinterval of $(0,\infty)$, and \eqref{eq:FdK} becomes
        \[
            F_d(K) = D-2\ln D+1+a+\ln a-\ln y-\frac{1}{2y}+o(1).
        \]
        The $y$-dependent part $-\ln y-\frac{1}{2y}$ has a unique maximum at $y=1/2$. Since $a=\ln2$, the largest value in this regime is
        \[
            D-2\ln D+\ln(4a)+o(1).
        \]
    
        \item Suppose that $K/D^2\to\infty$ and $K\leq d/D$. Then $y\to\infty$. The error term in \eqref{eq:FdK} is $o(1)$, while $-\ln y \to -\infty$. Therefore,
        \[
            F_d(K) - (D - 2\ln D) \to -\infty.
        \]
    
        \item Finally, if $d/D<K<d$, then
        \[
            D - \ln K \leq \ln D.
        \]
        The error term in \eqref{eq:FdK} is $O(1)$, and consequently
        \[
            F_d(K) = O(\ln D).
        \]
    \end{itemize}
    
    Thus, within the range $K<d$, only the scale \(K=\Theta(D^2)\) can attain $F_d(K) = D - 2\ln D + O(1)$.
    
    \paragraph{Range $\bm{K\geq d}$.}
    For $K\geq d$, Stirling's estimate and the monotonicity of the logarithm give, respectively,
    \begin{equation} \label{eq:binom-dK-K-bound}
        \binom{d + K}{K} \leq \frac{(d+K)^d}{(d/e)^d} = \left( e \left( 1 + \frac{K}{d}\right) \right)^d,
    \end{equation}
    and
    \[
        \ln(K+T_d)
        \leq
        \ln2+\max\{\ln K,\ln T_d\}.
    \]
    Substitution into the definition of $F_d(K)$ shows that
    \[
        F_d(K)=O(1)
    \]
    uniformly throughout this range.
    
    The above cases exhaust all sequences $K=K(d)$. Finally, choosing $K$ to be the nearest integer to $D^2/(2a)$ gives $y=1/2+o(1)$ and supplies the matching bound. Therefore,
    \[
        \sup_{K\in\mathbb{Z}_{\geq1}}F_d(K)
        =
        D-2\ln D+\ln(4a)+o(1).
    \]

\end{proof}


We now prove Theorem~\ref{thm:diam-u}.
\begin{proof}[Proof of Theorem~\ref{thm:diam-u}]

    Choose $G_d$ such that
    \[
        \ln G_d \geq \sup_{K\in\mathbb Z_{\geq1}}F_d(K),
        \qquad d \geq 4,
    \]
    with equality for all sufficiently large $d$. The finitely many exceptional values of $G_d$ will be specified at the end of the proof.
    By Lemma~\ref{lem:sup-Fdk},
    \[
        G_d = (4\ln 2+o(1))\frac{d}{(\ln d)^2}.
    \]

    We consider two cases: $n \geq 2d$ and $n < 2d$.
    
    \paragraph{Case $\bm{n \geq 2d}$.}

    Taking logarithms on both sides of \eqref{eq:diam-u}, it remains to show that
    \begin{equation} \label{eq:diam-u-goal}
        \frac{a \ln \Delta_u(d, n)}{\ln (n - d)} \leq \ln G_d.
    \end{equation}
    By the definition of $K = \left\lfloor \log_2\frac{n}{d} \right\rfloor$, we have
    \[
        d 2^K \leq n < d 2^{K+1},
    \]
    and thus
    \begin{equation} \label{eq:lnn-d}
        \ln (n - d) \geq D + \ln (2^K - 1),
    \end{equation}
    and
    \begin{equation} \label{eq:lnn}
        \ln n \leq D + a(K + 1).
    \end{equation}
    By Theorem~\ref{thm:curve-bd} and \eqref{eq:lnn}, we have
    \begin{equation} \label{eq:lnDelta}
        \begin{aligned}
            \ln \Delta_u(d, n) & \leq \ln \left[ n \binom{d + K}{K} (K + T_d) \right] \\
            & = \ln n + \ln \binom{d + K}{K} + \ln (K + T_d) \\
            & \leq D + a(K+1) + \ln \binom{d + K}{K} + \ln (K + T_d).
        \end{aligned}
    \end{equation}
    Then combining \eqref{eq:lnn-d} and \eqref{eq:lnDelta} gives
    \[
        \frac{a \ln \Delta_u(d, n)}{\ln (n - d)} \leq F_d(K) \leq \sup_{K^\prime\in\mathbb{Z}_{\geq 1}} F_d(K^\prime) \leq \ln G_d,
    \]
    which proves \eqref{eq:diam-u-goal}.


    \paragraph{Case $\bm{n < 2d}$.}
    The case $n = d$ is immediate. For $d < n < 2d$, Theorem~\ref{thm:d-step} gives
    \[
        \Delta_u(d, n) \leq \Delta_u(n - d, 2(n - d)).
    \]
    Moreover, the Hirsch bound holds in dimensions at most three, with dimensions one and two being elementary and dimension three following from Theorem~\ref{thm:diam-3u}.
    
    It therefore remains to consider $4 \leq n - d < d$. By Theorem~\ref{thm:curve-bd}, we have $K = 1$ and
    \[
        \Delta_u(n - d, 2(n - d)) \leq 2(n - d) (n - d + 1) (1 + T_{n-d}).
    \]
    By Lemma~\ref{lem:lnT}, we have
    \[
        \ln T_{n-d} = \frac{(\ln(n - d))^2}{2a} + \frac{\ln(n - d)}{2} + O(1).
    \]
    Thus
    \[
        \begin{aligned}
            2(n - d) (n - d + 1) (1 + T_{n-d}) & = (n - d)^{\frac{a + \ln(n - d) + \ln(n - d + 1) + \ln(1 + T_{n-d})}{\ln(n - d)}} \\
            & = (n - d)^{\frac{\ln(n - d)}{2a} + O(1)} \\
            & = (n - d)^{\frac{1}{2} \log_2(n - d) + O(1)} \\
            & = (n - d)^{\log_2\sqrt{n - d} + O(1)} \\
            & \leq (n - d)^{\log_2\sqrt{d} + O(1)},
        \end{aligned}
    \]
    where the inequality follows from $n < 2d$. Since the $O(1)$ term is absolute and $\sqrt{d} = o(d/(\ln d)^2)$, \eqref{eq:diam-u} holds in this case for all sufficiently large $d$. Enlarging the finitely many remaining values of $G_d$, if necessary, completes the proof without affecting its asymptotic expression.
    
\end{proof}

\subsection{A Half-Logarithmic Excess-Based Bound} \label{sec:diam-d}

Combining the curved-path estimate with the quantitative $d$-step reduction in Theorem~\ref{thm:d-step}, we obtain in Theorem~\ref{thm:diam-d} an upper bound depending only on the excess $n-d$. In the regime $n-d = \Theta(d)$, this yields a leading-order improvement, halving the leading coefficient of the quasi-polynomial exponent.

\begin{proof}[Proof of Theorem~\ref{thm:diam-d}]

    Theorem~\ref{thm:d-step} gives
    \[
        \Delta_u(d, n) \leq \Delta_u(n-d, 2(n-d)).
    \]
    For $(n-d)$-dimensional pointed polyhedra with $2(n-d)$ facets, Lemma~\ref{lem:path-encoding} says that a leaf with $J$ halvings has weakly decreasing labels
    \[
        n - d \geq \kappa_1 \geq \kappa_2 \geq \cdots \geq \kappa_J \geq 4,\qquad \kappa_j 2^j \leq 2(n-d),\ \forall j.
    \]
    Dropping the ordering constraint and retaining only the coordinatewise bounds $\kappa_j\leq(n-d)2^{1-j}$, the number of length-$J$ label sequences is at most
    \[
        \prod_{j=1}^J (n-d) 2^{1 - j} = (n-d)^J 2^{-J(J-1)/2}.
    \]
    By \eqref{eq:coeff-n}, its terminal weight is at most $2(n-d)$. Summing over $J$ proves \eqref{eq:delta-path-bound-d}.

    Then by Lemma~\ref{lem:lnT},
    \[
        \begin{aligned}
            \Delta_u(n-d, 2(n-d)) & \leq 2(n-d)T_{n-d} \\
            & = (n-d)^{\frac{a + \ln(n-d) + \ln T_{n-d}}{\ln(n-d)}} \\
            & = (n-d)^{\frac{a + \ln(n-d) + \frac{(\ln(n-d))^2}{2a} + \frac{\ln(n-d)}{2} + O(1)}{\ln(n-d)}} \\
            & = (n-d)^{\frac{\ln(n-d)}{2a} + \frac{3}{2} + O(1/\ln(n-d))} \\
            & = (n-d)^{\frac{1}{2}\log_2(n-d) + \frac{3}{2} + O(1/\ln(n-d))},
        \end{aligned}
    \]
    which proves \eqref{eq:diam-d}.
    
\end{proof}

\subsection{Ordered-Block Refinements of the Excess-Based Bound} \label{sec:diam-d-order}

When estimating the number of admissible curved paths, we disregard the ordering among the $\kappa_j$'s.  In this section, we partially restore these ordering constraints, thereby improving the additive constant $3/2$ in the exponent of the excess-based bound in \eqref{eq:diam-d}.

\begin{theorem} \label{thm:diam-d-order}
    For all $n - d \geq 4$, with
    \[
        \lambda_4 = \left( \frac{181}{512} \right)^{\frac{1}{4}},\qquad c_4 = \frac{3}{2} + \log_2 \lambda_4 = 1.1249614\ldots,
    \]
    there is an absolute constant $C_4$ such that
    \begin{equation} \label{eq:delta-path-bound-d-order}
        \Delta_u(d, n) \leq C_4 (n-d) T_{\lambda_4(n-d)},
    \end{equation}
    and consequently
    \begin{equation} \label{eq:diam-d-order}
        \Delta_u(d, n) \leq (n-d)^{\frac{1}{2}\log_2(n-d) + c_4 + O\left(\frac{1}{\ln(n-d)}\right)}.
    \end{equation}
\end{theorem}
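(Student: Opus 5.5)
The plan is to repeat the proof of \eqref{eq:delta-path-bound-d}, but restore the weak ordering inside consecutive blocks of four labels instead of discarding it entirely. Write $m=n-d$. Theorem~\ref{thm:d-step} gives $\Delta_u(d,n)\leq \Delta_u(m,2m)$. By Lemma~\ref{lem:path-encoding} and \eqref{eq:coeff-n}, it suffices to show that the number $N_J$ of weakly decreasing sequences $m\geq\kappa_1\geq\cdots\geq\kappa_J\geq4$ with $\kappa_j\leq m2^{1-j}$ satisfies
\[
N_J\leq C\,(\lambda_4 m)^J\,2^{-J(J-1)/2}
\]
for an absolute constant $C$. Summing over $J$ with terminal weight at most $2m$ then gives \eqref{eq:delta-path-bound-d-order} with $C_4=2C$.

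First I would split positions into blocks $B_t=\{4t+1,\dots,4t+4\}$, keep the ordering inside each block, and drop it across blocks. A block starting at position $4t+1$ has caps $M_t,\,M_t/2,\,M_t/4,\,M_t/8$, where $M_t=m2^{-4t}$. Its count is the number of integer $4$-tuples $x_1\geq x_2\geq x_3\geq x_4\geq 4$ with $x_i\leq M_t2^{1-i}$. I would compare this count with the volume of the polytope
\[
P=\{1\geq y_1\geq y_2\geq y_3\geq y_4\geq0,\ y_i\leq 2^{1-i}\},
\]
scaled by $M_t$. The key identity to verify is
\[
\mathrm{vol}(P)=2^{-6}\cdot\frac{181}{512}=2^{-6}\lambda_4^4,
\]
that is, the ordered region occupies the fraction $181/512$ of the box $\prod_i[0,2^{1-i}]$. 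I would compute this by iterated integration in $y_4,y_3,y_2,y_1$, splitting at the breakpoints $1/8,1/4,1/2$. This is a finite piecewise-polynomial computation.

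For the lattice-to-volume step, each admissible integer tuple $x$ lies in a unit cube contained in a slightly enlarged region: caps $M_t2^{1-i}+1$ and the relaxed order $y_i\geq y_{i+1}-1$. Hence the block count is at most $\lambda_4^4 M_t^4 2^{-6}\bigl(1+O(1/M_t)\bigr)$. Multiplying over blocks, the factors $\prod_t\bigl(1+O(2^{4t}/m)\bigr)$ stay bounded, since the exponents form a geometric series up to the last block with $M_t\geq 4$. Blocks with $M_t<4$ are empty because $\kappa_j\geq4$. A trailing partial block of $r\leq3$ positions is bounded by the unordered product, which costs at most a factor $\lambda_4^{-3}$, and this is absorbed into $C$. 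Finally, $\prod_j m2^{1-j}=m^J2^{-J(J-1)/2}$ matches the block products exactly, so this gives the displayed bound on $N_J$.

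For \eqref{eq:diam-d-order}, apply Lemma~\ref{lem:lnT} with $\sigma=\lambda_4 m$:
\[
\ln T_{\lambda_4 m}=\frac{(\ln m)^2}{2a}+\frac{\ln\lambda_4}{a}\ln m+\frac{\ln m}{2}+O(1).
\]
Adding $\ln(C_4m)$ and dividing by $\ln m$ gives the exponent $\tfrac12\log_2 m+\tfrac32+\log_2\lambda_4+O(1/\ln m)$, exactly as in the proof of Theorem~\ref{thm:diam-d}. I expect the main obstacle to be the volume computation giving $181/512$, together with making the lattice-point error uniform. The latter is where the absolute constant $C_4$, rather than an exact $2$, enters.
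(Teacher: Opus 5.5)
Your proposal is correct and follows the paper's proof essentially step for step. The shared steps are: the $d$-step reduction to $\Delta_u(n-d,2(n-d))$; blocks of four with ordering kept only inside each block; the ordered-region fraction $181/512$ (your $P$ is the paper's normalized region rescaled coordinatewise by $2^{1-i}$); geometric decay of the block caps to keep the lattice errors bounded; a factor $\lambda_4^{-3}$ for the trailing partial block; and Lemma~\ref{lem:lnT} with $\sigma=\lambda_4(n-d)$. The only difference is cosmetic. You bound the lattice error uniformly for every nonempty block via disjoint unit cubes in an enlarged region, and the translation $y_i\mapsto y_i-i+4$ places that region inside $(M_t+32)P$, which settles the uniformity you flagged. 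The paper instead applies the lattice estimate only to blocks with cap above a fixed threshold $B_0$ and drops the ordering on the boundedly many remaining blocks.
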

\begin{proof}

    By Theorem~\ref{thm:d-step}, it suffices to bound $\Delta_u(n-d, 2(n-d))$. We therefore apply the curved-path encoding to the reduced problem of dimension $n-d$ with $2(n-d)$ facets.
    To sharpen the resulting upper bound, we partition the positions into consecutive blocks of four and retain the ordering constraints within each block, while discarding those between different blocks.  For a block that begins at position $j$, the retained constraints are
    \[
        \kappa_j \geq \kappa_{j+1} \geq \kappa_{j+2} \geq \kappa_{j+3},
        \qquad \kappa_{j+\ell} \leq B/2^\ell,\ \ell = 0, 1, 2, 3,
    \]
    where $B = 2^{1-j}(n-d)$ is the cap at the first position of the block. Then normalize the four $\kappa$-variables by their respective upper bounds:
    \[
        x = \frac{\kappa_j}{B},
        \qquad y = \frac{\kappa_{j+1}}{B/2},
        \qquad z = \frac{\kappa_{j+2}}{B/4},
        \qquad w = \frac{\kappa_{j+3}}{B/8}.
    \]
    $(x, y, z, w) \in [0, 1]^4$ satisfies
    \begin{equation} \label{eq:p4-poly}
        y \leq 2x,
        \qquad z \leq 2y,
        \qquad w \leq 2z.
    \end{equation}
    These inequalities cut out a region of $[0, 1]^4$ with volume
    \[
        p_4 = \int_0^1 \int_0^{\min(2x,1)} \int_0^{\min(2y,1)} \int_0^{\min(2z,1)} \mathrm{d}w \mathrm{d}z \mathrm{d}y \mathrm{d}x = \frac{181}{512}.
    \]
    Moreover, the full box
    \[
        [0,B] \times [0,B/2] \times [0,B/4] \times [0,B/8]
    \]
    has volume $B^4/64$, while the subpolytope satisfying the ordering constraints has volume $p_4 B^4/64$. Since only $O(B^3)$ unit cubes meet their boundaries, each complete block contributes
    \[
        \frac{p_4 B^4/64 + O(B^3)}{B^4/64 + O(B^3)} = p_4 + O(1/B) = p_4 \left[1 + O\left(\frac{1}{B}\right)\right].
    \]
    Successive complete-block caps decrease by a factor of sixteen. Fix a sufficiently large constant $B_0$ and apply the lattice estimate only to blocks with $B_b\ge B_0$.  If $B_{\mathrm{last}}$ is the cap of the last such block, then
    \[
        \prod_{B_b\ge B_0} \left[ 1 + O\left(\frac{1}{B_b}\right) \right] 
        \leq e^{O\left( \sum_{B_b\ge B_0} \frac{1}{B_b} \right)} 
        \leq e^{O\left( \frac{1}{B_\mathrm{last}}\sum_{k=0}^{\infty} \frac{1}{16^k} \right)}
        = O(1),
    \]
    since $B_{\mathrm{last}}\geq B_0$. The remaining nonempty complete blocks are uniformly bounded in number, because their caps lie between $32$ and $B_0$, and the final incomplete block has at most three positions. Dropping their ordering conditions therefore changes the estimate only by a constant factor.
    
    Thus, writing $q = \lfloor J/4 \rfloor$ and $\lambda_4 = p_4^{1/4}$, the number of length-$J$ sequences is at most
    \[
        C_4 p_4^q (n-d)^J2^{-J(J-1)/2} = C_4\lambda_4^{-(J-4q)} \lambda_4^J(n-d)^J2^{-J(J-1)/2} 
        \leq C_4\lambda_4^{-3} \lambda_4^J(n-d)^J2^{-J(J-1)/2}.
    \]
    By \eqref{eq:coeff-n}, its terminal weight is at most $2(n-d)$. Summing over $J$ gives
    \[
        \Delta_u(d, n) \leq 2C_4\lambda_4^{-3} (n-d) T_{\lambda_4(n-d)}.
    \]
    Absorbing the fixed factor $2\lambda_4^{-3}$ into $C_4$ proves \eqref{eq:delta-path-bound-d-order}.


    Then by Lemma~\ref{lem:lnT},
    \[
        \begin{aligned}
            \Delta_u(n-d, 2(n-d)) & \leq C_4(n-d)T_{\lambda_4(n-d)} \\
            & = (n-d)^{\frac{\ln C_4 + \ln(n-d) + \ln T_{\lambda_4(n-d)}}{\ln(n-d)}} \\
            & = (n-d)^{\frac{\ln C_4 + \ln(n-d) + \frac{(\ln\lambda_4 + \ln(n-d))^2}{2a} + \frac{\ln\lambda_4 + \ln(n-d)}{2} + O(1)}{\ln(n-d)}} \\
            & = (n-d)^{\frac{\ln(n-d)}{2a} + (\frac{3}{2} + \log_2 \lambda_4) + O(1/\ln(n-d))} \\
            & = (n-d)^{\frac{1}{2}\log_2(n-d) + (\frac{3}{2} + \log_2 \lambda_4) + O(1/\ln(n-d))},
        \end{aligned}
    \]
    which proves \eqref{eq:diam-d-order}.
    
\end{proof}

For an integer $r\geq 1$, let $p_r$ denote the volume of
\[
    \left\{ (t_1,\ldots,t_r)\in[0, 1]^r : t_{j+1} \leq 2 t_j \text{ for } 1 \leq j < r \right\},
\]
and define
\[
    \lambda_r = p_r^{1/r}.
\]
The same argument used in the proof of Theorem~\ref{thm:diam-d-order} applies to any fixed block length $r$, with the boundary and incomplete-block contributions absorbed into a constant depending only on $r$. This yields the following corollary.
\begin{corollary}
    For every fixed $r\geq 1$, there exists a constant $C_r$ such that, for all $n-d\geq 4$,
    \begin{equation} \label{eq:delta-path-bound-d-order-r}
        \Delta_u(d, n) \leq C_r (n - d) T_{\lambda_r(n-d)}.
    \end{equation}
    Consequently, uniformly in $d$, as $n-d\to\infty$,
    \begin{equation} \label{eq:diam-d-order-r}
        \Delta_u(d, n) \leq (n - d)^{\frac{1}{2}\log_2(n - d) + c_r + O_r\left(\frac{1}{\ln(n-d)}\right)},
        \qquad c_r = \frac{3}{2} + \log_2 \lambda_r,
    \end{equation}
    where the constant implicit in $O_r(\cdot)$ may depend on $r$ but not on $d, n$.
\end{corollary}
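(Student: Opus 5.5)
The plan follows the proof of Theorem~\ref{thm:diam-d-order}, with blocks of four replaced by blocks of a fixed length $r$.

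\textbf{Reduction and block counting.} By Theorem~\ref{thm:d-step}, it suffices to bound $\Delta_u(n-d,2(n-d))$. Apply Lemma~\ref{lem:path-encoding} to this reduced problem. A leaf with $J$ halvings has terminal weight at most $2(n-d)$ by \eqref{eq:coeff-n}, and its labels satisfy $\kappa_j\le (n-d)2^{1-j}$ together with the weak ordering $\kappa_j\ge\kappa_{j+1}$. Partition the positions $1,\dots,J$ into consecutive blocks of length $r$. Keep the ordering constraints inside each block and drop them between blocks.

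For a block starting at position $j$, let $B=2^{1-j}(n-d)$ be its cap. Rescale the block variables by $t_\ell=\kappa_{j+\ell-1}/(B2^{1-\ell})$. Then $\kappa_{j+\ell}\le\kappa_{j+\ell-1}$ becomes $t_{\ell+1}\le 2t_\ell$. The admissible integer points of the block are therefore the lattice points of a polytope of volume $p_r\prod_{\ell}B2^{1-\ell}$ inside a box of that same product volume.

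\textbf{Lattice-point estimate.} Since $r$ is fixed, the number of lattice points in the polytope is $p_r\prod_\ell B2^{1-\ell}+O_r(B^{r-1})$, because only $O_r(B^{r-1})$ unit cubes meet the boundary. The box count is $\prod_\ell B2^{1-\ell}\,(1+O_r(1/B))$. So each block contributes a factor $p_r(1+O_r(1/B))$ relative to the unordered count. This step needs $p_r>0$, which holds because the region contains a neighborhood of the diagonal point $(1,\dots,1)$, and it needs $B$ large relative to $r$.

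Fix a threshold $B_0=B_0(r)$. Apply the estimate only to blocks with $B\ge B_0$. Their caps decrease geometrically by the factor $2^r$, so $\prod(1+O_r(1/B_b))\le e^{O_r(1/B_0)}=O_r(1)$.

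\textbf{Leftover blocks.} The blocks with caps below $B_0$ are bounded in number in terms of $r$, since positions with cap below $4$ admit no labels. The final incomplete block has fewer than $r$ positions. For both, drop the ordering and pay only the factor $p_r^{-1}$ per block, which is a constant depending on $r$. Writing $q=\lfloor J/r\rfloor$, the number of length-$J$ sequences is then at most
\[
C'_r\,p_r^{q}(n-d)^J2^{-J(J-1)/2}\le C'_r\lambda_r^{-(r-1)}\,\lambda_r^J(n-d)^J2^{-J(J-1)/2}.
\]
Multiplying by the weight $2(n-d)$ and summing over $J$ gives \eqref{eq:delta-path-bound-d-order-r}, after absorbing constants into $C_r$.

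\textbf{Asymptotics.} Apply Lemma~\ref{lem:lnT} with $\sigma=\lambda_r(n-d)$. Its $O(1)$ term is absolute, and $\sigma\to\infty$ because $\lambda_r>0$ is fixed. This gives
\[
\ln T_\sigma=\frac{(\ln(n-d)+\ln\lambda_r)^2}{2a}+\frac{\ln(n-d)}{2}+O_r(1).
\]
Dividing $\ln C_r+\ln(n-d)+\ln T_\sigma$ by $\ln(n-d)$ yields the exponent
\[
\tfrac12\log_2(n-d)+1+\tfrac12+\log_2\lambda_r+O_r(1/\ln(n-d)),
\]
which is \eqref{eq:diam-d-order-r}.

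The main obstacle is making the lattice-point estimate uniform. The $O(B^{r-1})$ boundary count must hold with a constant depending only on $r$, and the treatment of small-cap blocks must not depend on $n-d$. I would handle the first point by noting that the polytope is a fixed scaled polytope, so covering its boundary by unit cubes costs $O_r(B^{r-1})$. The second point is handled by the bounded number of small-cap blocks described above.
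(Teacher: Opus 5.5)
Your proposal is correct and is essentially the paper's argument. The paper obtains this corollary by rerunning the block-of-four proof of Theorem~\ref{thm:diam-d-order} with blocks of fixed length $r$, exactly as you do: a lattice-point factor $p_r(1+O_r(1/B))$ for each large-cap block, geometric decay of the caps, and boundedly many small-cap blocks plus one incomplete block absorbed into $C_r$, followed by Lemma~\ref{lem:lnT} with $\sigma=\lambda_r(n-d)$. Your extra factor $p_r^{-1}$ for the incomplete block is harmless overcounting, since that block is not among the $q=\lfloor J/r\rfloor$ complete blocks.
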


Deleting the single constraint between positions $r$ and $r + 1$ gives $p_{r + s} \leq p_r p_s$. Thus, the sequence $\{p_r\}$ is submultiplicative, and Fekete's lemma yields
\[
    \rho = \lim_{r\to\infty} p_r^{1/r} = \inf_{r\geq 1} p_r^{1/r}.
\]

Numerically, we obtain
\[
    \rho \approx 0.6720,\qquad c_* = \frac{3}{2} + \log_2 \rho \approx 0.9265.
\]
Choosing increasingly large fixed blocks makes $c_r$ arbitrarily close to $c_*$. Thus, $c_*$ is the limiting value of the additive constants obtained by this block-ordering refinement.

\subsection{A Target-Dimension Refinement of the General Bounds} \label{sec:diam-d-dim}

Section~\ref{sec:diam-d} shows that, in the regime $n - d = \Theta(d)$, the general excess-based bound \eqref{eq:diam-d} halves the leading coefficient of $\log_2 d$ in the exponent of \eqref{eq:diam-u} from $1$ to $1/2$.  We next consider a regime with moderately more facets, namely $n = d^{1/\gamma+o(1)}$ for a fixed $0 < \gamma < 1$.

\begin{theorem} \label{thm:off-diagonal}
    Fix $0 < \gamma < 1$, and suppose that $n = d^{1/\gamma + o(1)}$ as $d\to\infty$. Then,
    \begin{equation} \label{eq:off-diagonal-d}
        \Delta_u(d, n)
        \leq (n - d)^{ \left( 1 - \frac{\gamma}{2} + o(1) \right)\log_2 d }.
    \end{equation}
    Equivalently,
    \begin{equation} \label{eq:off-diagonal-excess}
        \Delta_u(d, n)
        \leq (n - d)^{ \left( \frac{1}{2} - \frac{(1-\gamma)^2}{2} + o(1) \right)\log_2(n-d) }.
    \end{equation}
\end{theorem}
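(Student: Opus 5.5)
The bound follows directly from the curved-path bound of Theorem~\ref{thm:curve-bd}; no new path counting should be needed. The content of the theorem is that in this regime the binomial factor $\binom{d+K}{K}$ and the tail factor $T_d$ contribute exponents of the same order, and their sum gives the coefficient $1-\gamma/2$.

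First, the hypothesis $n=d^{1/\gamma+o(1)}$ with $1/\gamma>1$ gives $n\geq 2d$ for all large $d$, so Theorem~\ref{thm:curve-bd} applies. Also $\ln(n-d)=\ln n+o(1)=(1/\gamma+o(1))D$, and
\[
    K=\left\lfloor\log_2\tfrac nd\right\rfloor=\Bigl(\tfrac1\gamma-1+o(1)\Bigr)\frac{D}{a}.
\]
In particular $K=\Theta(D)$, with $K\to\infty$. I would then bound the three factors of $n\binom{d+K}{K}(K+T_d)$ separately.
\begin{itemize}
    \item For the first factor, $\ln n=O(D)$.
    \item For the binomial, $\binom{d+K}{K}\leq (e(d+K)/K)^K$, so $\ln\binom{d+K}{K}\leq K(D+1)+O(K^2/d)\leq KD+O(D)$. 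Equivalently one may use \eqref{eq:binom}, discarding the favorable $-K\ln K$ term.
    \item For the tail factor, Lemma~\ref{lem:lnT} gives $T_d=e^{D^2/(2a)+O(D)}$. Since $K=O(D)$ is negligible against $T_d$, we get $\ln(K+T_d)=D^2/(2a)+O(D)$.
\end{itemize}
Together these give
\[
    \ln\Delta_u(d,n)\leq KD+\frac{D^2}{2a}+O(D)=\Bigl(\tfrac1\gamma-1+\tfrac12+o(1)\Bigr)\frac{D^2}{a}.
\]

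Next, divide by $\ln(n-d)\cdot\log_2 d=(1/\gamma+o(1))D\cdot D/a$. This gives the exponent
\[
    \frac{\ln\Delta_u(d,n)}{\ln(n-d)\log_2 d}\leq \gamma\Bigl(\tfrac1\gamma-\tfrac12\Bigr)+o(1)=1-\tfrac\gamma2+o(1),
\]
which is \eqref{eq:off-diagonal-d}. For \eqref{eq:off-diagonal-excess}, substitute $\log_2 d=(\gamma+o(1))\log_2(n-d)$ and use the identity $\gamma(1-\gamma/2)=\tfrac12-\tfrac{(1-\gamma)^2}{2}$.

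I expect the only real obstacle to be the error bookkeeping. The $o(1)$ in the exponent of $n$ only gives $K/D\to(1/\gamma-1)/a$, so every error term must be checked to be $o(D^2)$ in $\ln\Delta_u$, rather than merely bounded. The sketch above already covers this: the $-\ln K!$ term is discarded, and the remaining errors are $O(D)$ from $\ln n$, from Lemma~\ref{lem:lnT}, and from the Stirling bound. Should the crude binomial bound ever seem too lossy, I would fall back on \eqref{eq:binom}, which is valid here because $K<d$.
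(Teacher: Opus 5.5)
Your proposal is correct and follows essentially the same route as the paper. You apply Theorem~\ref{thm:curve-bd}, which is valid once $n\geq 2d$, and use $K=\bigl(\tfrac{1-\gamma}{a\gamma}+o(1)\bigr)D$; you then bound $\ln\binom{d+K}{K}$ by $KD+o(D^2)$ and $\ln(K+T_d)$ by $D^2/(2a)+O(D)$ via Lemma~\ref{lem:lnT}, and normalize by $\ln(n-d)\log_2 d=(1/\gamma+o(1))D^2/a$. The only cosmetic difference is that you use the elementary bound $\binom{d+K}{K}\leq (e(d+K)/K)^K$ and drop $-K\ln K$, while the paper uses the Stirling expansion \eqref{eq:binom}; since $K\ln K=O(D\ln D)=o(D^2)$, both give the same leading term.
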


\begin{proof}
    Since $1/\gamma > 1$, we have $d = o(n)$, and $n \geq 2d$ for all sufficiently large $d$. Moreover,
    \[
        n - d = n(1 - o(1)) = d^{1/\gamma + o(1)},
    \]
    and thus,
    \begin{equation} \label{eq:gamma-log-relation}
        D = \left(\gamma+o(1)\right)\ln (n - d).
    \end{equation}
    
    Since $\ln n = \ln (n - d)+o(1)$, \eqref{eq:gamma-log-relation} gives
    \begin{equation} \label{eq:gamma-K}
        K = \left\lfloor \log_2\frac{n}{d} \right\rfloor
        = \left( \frac{1 - \gamma}{a\gamma} + o(1) \right) D.
    \end{equation}
    Moreover, $K=\Theta(D)=o(d)$. Therefore, Stirling's formula yields
    \[
    \begin{aligned}
        \ln\binom{d+K}{K} & = K\left(D-\ln K+1\right) + O\left( \ln(K+1) + \frac{K^2}{d} \right) \\
        & = \left( \frac{1-\gamma}{a\gamma} + o(1) \right) D^2.
    \end{aligned}
    \]
    
    Meanwhile, Lemma~\ref{lem:lnT} and \eqref{eq:gamma-log-relation} give
    \[
        \ln T_d = \frac{D^2}{2a} + \frac{D}{2} + O(1).
    \]
    Since $\gamma > 0$ is fixed, $T_d$ grows faster than $K$, and hence
    \[
        \ln(K + T_d) = \ln T_d + o(1).
    \]
    
    Applying Theorem~\ref{thm:curve-bd}, we obtain
    \[
        \ln\Delta_u(d,n) \leq \ln n + \ln\binom{d+K}{K} + \ln(K+T_d) = \frac{1}{a} \left( \frac{1}{\gamma} - \frac{1}{2} + o(1) \right) D^2.
    \]
    Exponentiating gives
    \[
        \Delta_u(d,n) \leq (n-d)^{\left( 1 - \frac{\gamma}{2} + o(1) \right)\log_2 d},
    \]
    or equivalently,
    \[
        \Delta_u(d,n) \leq (n-d)^{ \left(\gamma - \frac{\gamma^2}{2} + o(1) \right)\log_2(n-d) }
        = (n-d)^{ \left(\frac{1}{2} - \frac{(1-\gamma)^2}{2} + o(1) \right)\log_2(n-d) }.
    \]
\end{proof}

Note that in the regime $n = d^{1/\gamma + o(1)}$ with fixed $0 < \gamma < 1$, our target-dimension refinement reduces the coefficient of $\log_2 d$ in \eqref{eq:diam-u} from $1$ to $1 - \gamma/2$; equivalently, for the excess-based bound, it improves the coefficient of $\log_2(n-d)$ in \eqref{eq:diam-d} from $1/2$ to $1/2 - (1-\gamma)^2/2$.


\subsection{Transition to the Tail Regime} \label{sec:diam-tail}

In this section, we consider the regime in which the number of facets is particularly large relative to the dimension, specifically $(\ln n)/d \to \infty$. We show that, in this regime, the finite curved-path estimate yields a tail-almost-linear bound.

\begin{corollary} \label{cor:diam-tail}    
    Along every sequence of integer pairs $(d,n)$ satisfying $d\geq 4$, $n \geq 2d$, and $(\ln n) / d \to \infty$, we have
    \begin{equation} \label{eq:diam-tail}
        \Delta_u (d, n) \leq n^{1 + o(1)}.
    \end{equation}
\end{corollary}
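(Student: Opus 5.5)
We apply Theorem~\ref{thm:curve-bd} directly and show that each factor other than $n$ is $n^{o(1)}$. Since $n\geq 2d$ and $d\geq 4$, the theorem gives
\[
    \ln\Delta_u(d,n) \leq \ln n + \ln\binom{d+K}{K} + \ln(K+T_d),
    \qquad K=\left\lfloor\log_2\frac nd\right\rfloor.
\]
It therefore suffices to prove that $\ln\binom{d+K}{K}=o(\ln n)$ and $\ln(K+T_d)=o(\ln n)$ along any sequence with $(\ln n)/d\to\infty$.

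\emph{The binomial factor.} In this regime $K\geq \log_2(n/d)-1$ is eventually much larger than $d$, because $\ln n\gg d\geq \ln d$. Hence the bound \eqref{eq:binom-dK-K-bound} from the proof of Lemma~\ref{lem:sup-Fdk} applies:
\[
    \ln\binom{d+K}{K}\leq d\left(1+\ln\left(1+\tfrac Kd\right)\right).
\]
Moreover $K\leq \log_2 n$, so $K/d\leq \log_2 n/d$. Write $t=(\ln n)/d\to\infty$. The right-hand side is then $O\bigl(d(1+\ln t)\bigr)=O\bigl(\ln n\cdot \frac{1+\ln t}{t}\bigr)=o(\ln n)$. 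This is the key step, and it is exactly where the hypothesis $(\ln n)/d\to\infty$ enters: the factor $\binom{d+K}{K}$ behaves like $(eK/d)^d$, which is subpolynomial in $n$ precisely when $d\ln(\ln n/d)=o(\ln n)$.

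\emph{The factor $K+T_d$.} Use $\ln(K+T_d)\leq \ln 2+\max\{\ln K,\ln T_d\}$. First, $\ln K\leq \ln\log_2 n=o(\ln n)$. For $T_d$ I would not rely on Lemma~\ref{lem:lnT}, since $d$ need not tend to infinity (it may stay bounded, e.g.\ $d=4$ fixed). Instead, the crude bound $\sigma^L2^{-L(L-1)/2}\leq \sigma^L$ for $L\leq 2\log_2\sigma+1$, together with geometric decay afterwards, gives $\ln T_d=O((\ln d)^2+1)$. Alternatively, the uniform bound from Lemma~\ref{lem:lnT} covers large $d$ and finitely many exceptions are bounded. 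Since $(\ln d)^2\leq d^2$ and $d=o(\ln n)$, we get $(\ln d)^2=O(d\ln d)$; because $\ln d\leq\ln\ln n$, this is $o(\ln n)$. Indeed, in the case where $d$ stays bounded it is trivially $O(1)$.

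Combining the two estimates gives $\ln\Delta_u(d,n)\leq (1+o(1))\ln n$, which is \eqref{eq:diam-tail}. The main point of care is the uniformity: the argument should hold along arbitrary sequences in which $d$ may be bounded or unbounded. This is why I would use explicit, non-asymptotic-in-$d$ bounds for both $\binom{d+K}{K}$ and $T_d$, rather than the $d\to\infty$ asymptotics.
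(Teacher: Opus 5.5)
Your proposal is correct and follows the paper's proof essentially step for step: Theorem~\ref{thm:curve-bd}, the bound $\binom{d+K}{K}\le(e(1+K/d))^d$ with $K\le\log_2 n$, and $\ln(K+T_d)\le\ln 2+\max\{\ln K,\ln T_d\}$. Your explicit handling of bounded $d$ makes precise what the paper leaves implicit in its $O(\ln\ln n+D^2)$ term.

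One small repair is needed. The detour through $O(d\ln d)$ is misleading, because $d\ln d$ need not be $o(\ln n)$ in this regime (e.g.\ $\ln n\approx d\sqrt{\ln d}$). Conclude directly instead from $(\ln d)^2\le(\ln\ln n)^2$ or from $(\ln d)^2\le d=o(\ln n)$.
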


\begin{proof}
    By Theorem~\ref{thm:curve-bd},
    \[
        \Delta_u(d, n) \leq n \binom{d + K}{K} (K + T_d).
    \]
    Using
    \[
        K \leq \frac{\ln n}{a},
    \]
    and \eqref{eq:binom-dK-K-bound}
    \[
        \binom{d + K}{K} \leq \left( e \left( 1 + \frac{K}{d}\right) \right)^d,
    \]
    we have
    \[
        \begin{aligned}
            \ln \Delta_u(d, n) & \leq \ln n + d \ln \left( e \left( 1 + \frac{K}{d}\right) \right) + \ln (K + T_d) \\
            & \leq \ln n + d \ln \left( e \left( 1 + \frac{\ln n}{ad}\right) \right) + a + \max\{\ln K, \ln T_d\} \\
            & = \ln n + d \ln \left( e \left( 1 + \frac{\ln n}{ad}\right) \right) + O(\ln \ln n + D^2) \\
            & = \ln n + o(\ln n).
        \end{aligned}
    \]
    Therefore,
    \[
        \Delta_u(d, n) \leq n^{1 + o(1)}.
    \]

    
\end{proof}

The same calculation also yields a polynomial tail bound in $n$ when $(\ln n)/d=\Theta(1)$. We include Corollary~\ref{cor:diam-tail} and this observation only for completeness, in order to describe the transition of the finite curved-path estimate from the quasi-polynomial regime to tail-almost-linear behavior. These conclusions do not improve the best known tail bounds, since the generalized Larman bound gives quantitatively stronger estimates throughout this range.

\section{Sharpness within Positive Curved-Path Counting} \label{sec:sharp-u}


We now ask whether a more precise count of the recursion paths generated by \eqref{eq:GK} could reduce the leading constant $4\ln2$ in \eqref{eq:diam-u}. After fully expanding the recurrence and discarding the negative internal terms, we exhibit a family of admissible leaves whose total positive contribution already has the same leading order.

\begin{theorem} \label{thm:sharp-u}
    Let
    \[
        n = d2^{\left\lfloor \frac{D^2}{2a} \right\rfloor}.
    \]
    As $d \to \infty$, there exists a family of admissible recursion leaves whose total contribution is at least
    \begin{equation} \label{eq:sharp-major}
        (n-d)^{ \log_2 \left( (4\ln2 + o(1))\frac{d}{(\ln d)^2} \right) }.
    \end{equation}
    Consequently, the leading constant $4\ln 2$ in the general curved-path bound \eqref{eq:diam-u} cannot be improved by a more accurate positive count of the same recursion leaves.
\end{theorem}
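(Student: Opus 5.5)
The plan is to reverse the counting in Theorem~\ref{thm:curve-bd}. I will fix $K=\lfloor D^2/(2a)\rfloor$, so that $n=d2^K$, and fix a single tail length $L$ close to the maximizing index of $T_d$, say $L=\lfloor \log_2 d\rfloor-3$. I will then exhibit explicitly many admissible curved paths of length $J=K+L$ in the sense of Lemma~\ref{lem:path-encoding}. By Theorem~\ref{thm:diam-3u}, each such leaf contributes
\[
2^J\bigl(\lfloor n/2^J\rfloor-3\bigr).
\]
Since $n/2^J=d2^{-L}\geq 8$, this quantity is at least $n/8$. It therefore suffices to show that the number $N$ of constructed paths satisfies
\[
\ln N\geq \ln\binom{d+K}{K}+\frac{D^2}{2a}-o(D^2).
\]
Indeed, $\ln(n-d)=D+aK+O(1)=D^2/2+O(D)$. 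Hence the target exponent in \eqref{eq:sharp-major}, multiplied by $\ln(n-d)$, equals $F_d(K)\ln(n-d)/a$ up to $o(D^2)$. Here $F_d(K)$ is evaluated at $y=aK/D^2=1/2+o(1)$, and by Lemma~\ref{lem:sup-Fdk} and Lemma~\ref{lem:lnT} this reduces to $\ln n+\ln\binom{d+K}{K}+\ln T_d-o(D^2)$. Also, the single term $L\approx D/a$ of $T_d$ already captures $\ln T_d$ up to $O(D)$.

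\textbf{Construction.} Set $m=\lceil 3\log_2 D\rceil$ and $d'=\lfloor d/D^3\rfloor$. The paths are built in three parts.
\begin{itemize}
\item Head ($j\leq K$): take any weakly decreasing labels in $[d',d]$. The curved constraint $\kappa_j2^j\leq d2^K=n$ is automatic here. The number of choices is $\binom{d-d'+K}{K}$, and
\[
\ln\binom{d-d'+K}{K}=\ln\binom{d+K}{K}-O(K/D^3)=\ln\binom{d+K}{K}-o(1).
\]
\item Bridge ($K<j\leq K+m$): set every label equal to $\lfloor d2^{-m}\rfloor$. This is at most $d'$ and respects the cap $d2^{-\ell}$ at position $K+\ell$. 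This block has a single choice; relative to the ideal count it loses at most $m\cdot D=O(D\ln D)=o(D^2)$ in logarithm.
\item Tail ($m<\ell\leq L$): choose $\kappa_{K+\ell}\in(d2^{-\ell-1},d2^{-\ell}]$. These dyadic ranges are disjoint and decreasing in $\ell$, so weak monotonicity between consecutive positions is automatic, as is the cap $\kappa_{K+\ell}2^{K+\ell}\leq n$. The labels stay $\geq 4$ because $L\leq\log_2 d-3$. The count is $\prod_\ell\lfloor d2^{-\ell-1}\rfloor\geq\exp\bigl(LD-aL^2/2-O(L)\bigr)$.
\end{itemize}

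\textbf{Putting the pieces together.} With $L=D/a+O(1)$, the tail count gives $\exp(D^2/(2a)-O(D))$, matching $\ln T_d$ up to $O(D)$. The omitted positions $\ell\leq m$ cost only another $O(D\ln D)$. Combining the three factors with the per-leaf weight $n/8$ yields the displayed lower bound on $\ln N$. Dividing by $\ln(n-d)=(1+o(1))D^2/2$ then converts the $o(D^2)$ slack into the $o(1)$ inside $G_d$. This gives \eqref{eq:sharp-major}, and with it the concluding claim that no sharper positive count of the same recursion leaves can beat the constant $4\ln 2$.

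\textbf{Expected obstacle.} The main difficulty is the interface between head and tail, where the ordering constraint $\kappa_K\geq\kappa_{K+1}$ couples the two. The naive choice of head labels in $[d/2,d]$ costs $K\ln 2\approx D^2/2$, which is not $o(D^2)$ and would spoil the constant. The device of letting the head range down to $d/D^3$ and inserting a short constant bridge of length $O(\ln D)$ is meant to make the interface cost only $O(D\ln D)$. I will need to check carefully that these losses, together with the floor effects in $K$ and in the dyadic tail ranges, are all $o(D^2)$ as $d\to\infty$.
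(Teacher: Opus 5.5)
Your proposal is correct and follows the paper's proof in its essentials: you fix $K=\lfloor D^2/(2a)\rfloor$, count weakly decreasing head labels in a range $[d/\mathrm{poly}(D),d]$, append a tail of about $D/a$ positions drawn from dyadically shrinking intervals, multiply by the $\Theta(n)$ terminal weight from Theorem~\ref{thm:diam-3u}, and absorb the $O(D\ln D)$ losses into the $o(1)$ after dividing by $\ln(n-d)\sim D^2/2$. The only real difference is at the head--tail interface: the paper stops the head at $d/D$ and shifts every tail interval down by a factor $D$, costing $L\ln D=O(D\ln D)$, whereas your constant bridge of length $O(\ln D)$ costs the same order, and you normalize through $F_d(K)$ instead of recomputing the binomial asymptotics directly.
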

\begin{proof}

    It suffices to construct the asserted family of admissible leaves. We consider
    \[
        K = \left\lfloor \log_2\frac{n}{d} \right\rfloor = \left\lfloor \frac{D^2}{2a} \right\rfloor,
        \qquad L = \left\lfloor \log_2 \frac{d}{16D} \right\rfloor,
    \]
    so that $n = d 2^K$. After the recursion is completed down to dimension three, each path in this family has accumulated exactly $K+L$ halvings.

    For the first $K$ positions, restrict every dimension label to satisfy
    \[
        d \geq \kappa_1 \geq \kappa_2 \geq \cdots \geq \kappa_K \geq \left\lceil \frac{d}{D} \right\rceil.
    \]
    Since $n = d2^K$ and $\kappa_j \leq d$, the constraint $\kappa_j 2^j \leq n$ holds for these first $K$ labels. The interval $[\lceil d / D \rceil, d]$ contains
    \[
        d - \lceil d/D\rceil + 1 = \left\lfloor \left(1 - \frac{1}{D}\right) d \right\rfloor + 1
    \]
    integers. Hence, the number of weakly decreasing sequences of length $K$ is
    \[
        \binom{\lfloor (1 - 1/D) d \rfloor + K}{K}.
    \]

    For the next $L$ positions, require the label at position $K + \ell$ to satisfy
    \begin{equation} \label{eq:kappa-L-bound}
        \left\lfloor \frac{d}{2^{\ell-1} D} \right\rfloor
        \geq \kappa_{K+\ell}
        \geq \left\lceil \frac{d}{2^\ell D} \right\rceil,
        \qquad 1 \leq \ell \leq L.
    \end{equation}
    These adjacent intervals preserve the weak ordering. Moreover, for sufficiently large $d$, we have $D \geq 2$ and then $\kappa_{K+\ell} 2^{K+\ell} \leq n$. Thus, the resulting sequences are admissible. Each interval $\left[ \lceil d / (2^\ell D) \rceil, \lfloor d/(2^{\ell-1}D) \rfloor \right]$ contains $\Theta(d/(2^\ell D))$ integers, with an implicit constant uniform in $\ell$.

    Therefore, the construction gives at least
    \begin{equation} \label{eq:sharp-path-count}
        \binom{\lfloor (1 - 1/D) d \rfloor + K}{K} \prod_{\ell=1}^L \Theta \left( \frac{d}{2^\ell D} \right)
    \end{equation}
    admissible paths. All these paths have the same terminal contribution. For pointed polyhedra, Theorem~\ref{thm:diam-3u} gives
    \[
        2^{K+L} \Delta_u\left(3, \left\lfloor\frac{n}{2^{K+L}}\right\rfloor\right)
        = 2^{K+L} \left(\left\lfloor\frac{n}{2^{K+L}}\right\rfloor - 3\right)
        = n\left( 1 + O\left(\frac{1}{D}\right) \right).
    \]
    For polytopes, Theorem~\ref{thm:diam-3b} gives
    \[
        2^{K+L} \Delta_b\left(3, \left\lfloor\frac{n}{2^{K+L}}\right\rfloor\right)
        = \frac{2}{3}n\left( 1 + O\left(\frac{1}{D}\right) \right).
    \]
    Thus, in either case, the logarithm of the terminal contribution is
    \begin{equation} \label{eq:sharp-log-weight}
        \ln n + O(1).
    \end{equation}

    It remains to estimate the two factors arising from the path count \eqref{eq:sharp-path-count}.
    Since
    \[
        \ln \lfloor (1 - 1/D) d \rfloor
        = D + \ln \left(1 - \frac{1}{D}\right) + O\left(\frac{1}{d}\right)
        = D - \frac{1}{D} + O\left(\frac{1}{D^2}\right),
    \]
    and
    \[
        \ln K = 2 \ln D - \ln(2a) + o(1),
    \]
    from \eqref{eq:binom}, we have
    \begin{equation} \label{eq:sharp-K}
        \begin{aligned}
            \ln \binom{\lfloor (1 - 1/D) d \rfloor + K}{K} & = K\left( \ln \lfloor (1 - 1/D) d \rfloor - \ln K + 1 \right) + O\left( \ln(K+1) + \frac{K^2}{\lfloor (1 - 1/D) d \rfloor}\right) \\
            & = K\left( D - 2 \ln D + \ln(2a) + 1 \right) - \frac{K}{D} + O\left( \frac{K}{D^2} + \ln(K+1) + \frac{K^2}{d}\right) \\
            & = K\left( D - 2 \ln D + \ln(2a) + 1 \right) + O(D).
        \end{aligned}
    \end{equation}

    Let $N_\ell$ denote the number of integers in the interval specified by \eqref{eq:kappa-L-bound}. Since $L = \frac{D - \ln D}{a} + O(1)$, and the implicit constants in $N_\ell = \Theta(d / (2^\ell D))$ are uniform in $\ell$, taking logarithms gives
    \begin{equation} \label{eq:sharp-L}
        \begin{aligned}
             \ln \prod_{\ell=1}^L N_\ell 
            & = \sum_{\ell=1}^L [D - a\ell - \ln D + O(1)] \\
            & = L(D - \ln D) - \frac{aL(L+1)}{2} + O(L) \\
            & = \frac{(D - \ln D)^2}{2a} + O(D) \\
            & = \frac{D^2}{2a} + o(D^2).
        \end{aligned}
    \end{equation}

    The selected family of paths contributes
    \[
        \Theta(n) \binom{\lfloor (1 - 1/D) d \rfloor + K}{K} \prod_{\ell=1}^L \Theta \left( \frac{d}{2^\ell D} \right).
    \]
    To compare this contribution with the bound in \eqref{eq:diam-u}, we use the logarithmic normalization introduced in \eqref{eq:diam-u-goal}. Hence, by \eqref{eq:sharp-log-weight}, \eqref{eq:sharp-K}, and \eqref{eq:sharp-L}, the normalized logarithm of the total contribution is bounded below by
    \[
    \begin{aligned}
        & \frac{a \left[ \ln n + O(1) + K\left( D - 2 \ln D + \ln(2a) + 1 \right) + O(D) + \frac{D^2}{2a} + o(D^2) \right] }{\ln(d2^K - d)} \\
        =\ & \frac{a \left[ D + aK + K\left( D - 2 \ln D + \ln(2a) + 1 \right) + \frac{D^2}{2a} + o(D^2) \right] }{D + aK + O(1)} \\
        =\ & D - 2\ln D + \ln(4\ln 2) + o(1) \\
        =\ & \ln \left( (4\ln 2 + o(1))\frac{d}{(\ln d)^2} \right).
    \end{aligned}
    \]
    Reversing the normalization in \eqref{eq:diam-u-goal} gives the lower bound in \eqref{eq:sharp-major}. Together with the positive path-counting upper bound established in Theorem~\ref{thm:diam-u}, this proves the claimed sharpness.

\end{proof}

We emphasize that Theorem~\ref{thm:sharp-u} is a sharpness statement only for the positive path-counting argument; the terminal contributions need not be simultaneously realized by any polyhedron. It neither constructs polyhedra with diameters of this order nor provides a lower bound on the true polyhedral diameter.






\section{Conclusion} \label{sec:conclu}

In this paper, we obtain asymptotically improved upper bounds for the diameters of pointed polyhedra through a curved-path refinement. Blockwise ordering improves the additive constant in the exponent, while a target-dimension refinement improves the leading coefficient in certain asymptotic regimes. The same curved-path estimate also recovers almost-linear behavior in the deep-tail regime.

Nevertheless, our general bounds remain quasi-polynomial and therefore do not establish the polynomial Hirsch conjecture. Our sharpness analysis shows that the leading term is sharp for the positive path-counting argument used here.


\section*{Acknowledgments}
This research is partially supported by NSFC [Grant NSFC-72225009, 72688301, 72394360, 72394365].

AI tools assisted with parts of the theoretical development; the authors verified all results and take full responsibility for the paper.


\bibliographystyle{plainnat}
\bibliography{ref}


\end{document}